\newcommand{\R}{\mathbb{R}}
\newcommand{\E}{\mathbb{E}}
\renewcommand{\P}{\mathbb{P}}
\newcommand{\T}{\mathbb{T}}
\newcommand{\X}{\mathcal{X}}
\newcommand{\cC}{\mathcal{C}}
\newcommand{\SC}{\mathcal{SC}}
\newcommand{\mpd}{m_{p, d}(\mathcal{X})}
\newcommand{\Mpd}{M_{p, d}(\mathcal{X})}
\newcommand{\M}{\mathcal{M}}
\newtheorem{thm}{Theorem}[section]
\newtheorem{corollary}[thm]{Corollary}
\newtheorem{lemma}[thm]{Lemma}
\newtheorem{remark}[thm]{Remark}
\newtheorem{prop}[thm]{Proposition}
\newtheorem{conjecture}[thm]{Conjecture}
\newtheorem{theorem}[thm]{Theorem}
\theoremstyle{definition}
\newtheorem{df}[thm]{Definition}
\newtheorem*{claim*}{Claim}
\numberwithin{equation}{section}
\newtheorem*{acknowledgement}{Acknowledgement}
\begin{document}

\title[]{Bounds for exit times of Brownian motion and the first Dirichlet eigenvalue for the Laplacian}
\author[Banuelos]{Rodrigo Ba\~{n}uelos{$^{\dag}$}}
\thanks{\footnotemark {$\dag$} Research was supported in part by NSF Grant DMS-1854709}
\address{ Department of Mathematics\\
Purdue University\\
West Lafayette, IN 47907,  U.S.A.} \email{banuelos@purdue.edu}

\author[Mariano]{Phanuel Mariano{$^\star$}}
\thanks{\footnotemark {$\star$} Research was supported in part by an AMS-Simons Travel Grant 2019-2023.}
	\address{Department of Mathematics\\
		Union College\\
		Schenectady, NY 12308,  U.S.A.}
	\email{marianop@union.edu}

\author[Wang]{Jing Wang{$^{\ddag}$}}
\thanks{\footnotemark {$\ddag$} Research was supported in part by NSF Grant DMS-1855523}
\address{ Department of Mathematics\\
Purdue University\\
West Lafayette, IN 47907,  U.S.A.}
\email{jingwang@purdue.edu}

\keywords{exit times, moments, torsion function, Dirichlet Laplacian, principal eigenvalue, extremals}
\subjclass{Primary 60J60, 35P15; Secondary  60J45, 58J65, 35J25,49Q10}


\begin{abstract}  For domains in $\mathbb{R}^d$, $d\geq 2$, we prove  universal upper  and lower bounds on the product of the bottom of the spectrum
for the Laplacian to the power $p>0$ and the supremum over all starting points of the $p$-moments of the exit time of Brownian motion.  It is shown that the lower bound is sharp for integer values of $p$ and that for $p \geq 1$, the upper bound is asymptotically sharp as $d\to\infty$. For all $p>0$,  we prove the existence of an extremal domain among the class of domains that are convex and symmetric with respect to all coordinate axes.  For this class of domains we conjecture that the cube is extremal.  

\end{abstract}

\maketitle

\tableofcontents

\section{Introduction and statements of main results}

There is a large class of results often referred to as generalized isoperimetric inequalities that have wide  interest in both the mathematics and physics community, see Poly\'a and Szego \cite{Polya-Szego-1951} and Bandle \cite{Bandle-1980}. At the heart of these inequalities is the classical isoperimetric inequality which states that among all regions of fixed volume, surface area is minimized by  balls.  In spectral theory among the classical results is the celebrated Rayleigh-Faber-Krahn inequality which states that among all domains $D\subset \R^d$ having the same volume as a ball $D^{*}$,  
\begin{equation}\label{egen1} 
\lambda_1(D) \geq \lambda_1(D^*),
\end{equation}
 where $\lambda_1(D)$ denotes the first Dirichlet eigenvalue for the Laplacian in $D$.  Further, equality holds if and only if $D$ is a ball. Without loss of generality we take $D^*$ to be  centered at the origin.

On the other hand, it has also been known for many years that one can state many of these inequalities in terms of the exit time of Brownian motion from the domain $D$.  This probabilistic connection provides new insights and  raises new interesting questions on their validity for processes other than Brownian motion, such as L\'evy processes.  To illustrate, let  $B_{t}$ be a $d-$dimensional Brownian motion starting at the point $x\in D$ 
and let $\tau_{D}=\inf\left\{ t>0\mid B_{t}\notin D\right\}$ 
be its first exit time from $D$. Using the symmetrization techniques for multiple integrals in \cite{Brascamp-Lieb-Luttinger-1974,Luttinger-1973a,Luttinger-1973b} it follows that 

\begin{equation}\label{Lifetime2}
\sup_{x\in D}\mathbb{P}_{x}\left(\tau_{D}>t\right)\leq\mathbb{P}_{0}\left(\tau_{D*}>t\right),
\end{equation}
for all $t>0$.  In particular, for any $p>0$, 

\begin{equation}\label{Lifetime}
\sup_{x\in D}\mathbb{E}_{x}\left[\tau_{D}^p\right]\leq\mathbb{E}_{0}\left[\tau_{D^*}^p\right].
\end{equation}
Equality holds in these inequalities if and only if $D$ is a ball.  Inequality \eqref{egen1} follows from inequality \eqref{Lifetime2} by taking into account the classical result that for any $D\subset \R^d$, 
$$
\lim_{t\to\infty}\frac{1}{t} \log\mathbb{P}_{x}\left(\tau_{D}>t\right)=-\frac{\lambda_1(D)}{2}.$$
In a similar way, the classical isoperimetric inequality can be obtained from isoperimetric inequalities for exit times of Brownian motion using small time behavior. These are now classical results with many extensions and applications  that can be found in
\cite{Aizenman-Simon-1982, Burchard-etal2001, Talenti1976, Banuelos2010a} and many other references given in these papers. 

From the connections among \eqref{egen1}, \eqref{Lifetime2} and \eqref{Lifetime} we can already observe a competing relation between  $\lambda_1(D)$ and $\sup_{x\in D}\mathbb{E}_{x}\left[\tau_{D}\right]$. Indeed fine connections between the two quantities have been investigated for many years by many authors.  Consider the domain functional 
$
G\left(D\right)  =\lambda_{1}\left(D\right)\sup_{x\in D}\mathbb{E}_{x}\left[\tau_{D}\right].
$
It was proven in \cite{Banuelos-Carrol1994} that for all simply connected domains $D$ in $\R^2$, 
\begin{equation}\label{ExpEig-BanCar}
2\leq G(D)\leq\frac{7\zeta\left(3\right)j_{0}^{2}}{8}\approx 6.08.
\end{equation}
In higher dimensional spaces, it is easy to show the lower bound $G\left(D\right)\ge 2$ for all domains $D\subset \R^d$ (see \cite{Banuelos-Carrol1994} and Section \ref{sec:LowerBound} below). In \cite{Vandenberg-2017a, Henrot-Lucardesi-Philippin-2018} the authors independently show that $2$  is in fact a sharp lower bound for all bounded domains. Many results have been devoted to obtaining the upper bound estimates for $G(D)$ (see  \cite{Vandenberg-Carroll-2009}, \cite{Giorgi-Smits-2010} and  \cite{Vogt-2019a}). In particular the recent paper \cite{Vogt-2019a} improves the upper bound for all general domains to 
\begin{equation}\label{VogtBound}
\frac{d}{4}+\frac{\sqrt{d}}{2}\sqrt{5\left(1+\frac{1}{4}\log2\right)}+2.
\end{equation}
With the leading term $\frac{d}{4}$, this bound is asymptotically sharp as $d\to\infty$.
However the question of proving a sharp upper bound is wide open even when restricted to special classes of domains such as planar simply connected or convex domains.

The main object of study of this paper is  the shape functional
\begin{equation}\label{eq-shape-func}
G_{p,d}\left(D\right)  =\lambda_{1}^{p}\left(D\right)\sup_{x\in D}\mathbb{E}_{x}\left[\tau_{D}^{p}\right]. 
\end{equation} 
Here $\lambda_{1}\left(D\right)$ is the first Dirichlet eigenvalue
for the Laplacian in $D$. When the Laplacian has no discrete spectrum
in $D$, then we take $\lambda_{1}\left(D\right)$ to be the \textbf{bottom
of the spectrum} for the Laplacian which is given by $\lambda_{1}\left(D\right)=\inf_{\phi}\frac{\int_{D}\left|\nabla\phi\right|^{2}dx}{\int_{D}\phi^{2}dx}$
where $\phi\in H_{0}^{1}\left(D\right), \phi\neq 0$.  The $p$-moments of the exit time $\mathbb{E}_{x}\left[\tau_{D}^{p}\right]$ can also be stated in terms of the {torsion function}. For $0<p<\infty$, the  $p-${\it torsion moment function} 
 $u_{p}:D\to\bar{\mathbb{R}}_{+}$ is defined by
\begin{equation}\label{p-torsion} 
u_{p}(x)=\frac{1}{2^{p}\Gamma\left(p+1\right)}\mathbb{E}_{x}\left[\tau_{D}^{p}\right].
\end{equation} 
Not to be confused  with the $p-$torsion function related to the $p$-Laplacian \cite{Brasco-2017,Della-etall-2018,Giorgi-Smits-2010}. When $k\in\mathbb{N}$, and $\sup_{x\in D}u_{k}(x)<\infty$, these functions are solutions to 
\begin{equation}\label{p-Torsion-PDE}
\begin{cases}
-\Delta u_{1}=1 & u_{1}\in H_{0}^{1}\left(D\right)\\
-\Delta u_{k}=u_{k-1} & u_{k}\in H_{0}^{1}\left(D\right),k=2,3,\dots
\end{cases}
\end{equation}
When $p=1$, $u_{1}(x)=\frac{1}{2}\E_x[\tau_D]$ is the classical torsion function which has been extensively studied in the literature with applications to many areas of mathematics and mathematical physics. See for instance the classical works \cite{Bandle-1980,Landau-Lifshitz-1959,Polya-Szego-1951}.  

For general $p$, the literature in the study of exit time moments 
and their applications to different fields is extremely large by now. In the works \cite{Boudabra-Markowsky-2020a,Boudabra-Markowsky-2021a,Boudabra-Markowsky-2019,Markowsky-2015} Boudabra-Markowsky studied exit time moments, proved results on the location of their maximum and gave conditions for their finiteness. The $k$-torsion moment functions $u_{k}$ have also been applied to the study of heat flow by McDonald-Meyers-Meyerson in \cite{McDonald-2013,McDonald-Meyers-2003,Meyerson-McDonald-2017}. In \cite{laPena-McDondald-2004}, de la Pe\~na-McDonald provide an algorithm that produces uniform approximations of arbitrary continuous functions by exit time moments. We also point to the work of \cite{Hurtado-Markvorsen-Palmer-2016}, where Hurtado-Markvorsen-Palmer use the $L^1$ norms of $u_k$ to give an alternative characterization for $\lambda_1 (D)$ on Riemannian manifolds. In the closely related papers \cite{Colladay-Langford-McDonald-2018,Dryden-Langford-McDonald-2017}, the authors  give upper bounds on $\lambda_1 (D)$ using the $L^1$ norms of exit time moments on manifolds. Moreover, obtaining precise spectral bounds for the exit time in large dimensions has been of interest in other settings. For example in \cite{Panzo2020}, Panzo obtains a spectral bound for the torsion function of symmetric stable processes that has the correct order of growth. For other applications to the study of exit time moments, torsional rigidity, stability, the study of minimal sub-manifolds, and optimal trapping of Brownian motion and gradient estimates, we refer the reader to \cite{Mariano-Panzo-2020,Hoskins-Steinerberger-2019,Kim-2019,Lu-Steinerberger2019b,Markvorsen-Palmer-2006}. \\


Our main goals  in this paper are to investigate sharp  bounds for $G_{p,d}(D)$, $D\in \mathcal{X}$ where $\mathcal{X}$ contains all  domains in $\R^d$ such that $\lambda_1(D)>0$;  and prove the existence of their extremals in the class of convex domains which are symmetric with respect to each coordinate axis. 

For the rest of this paper we will work with the  function $\mathbb{E}_{x}\left[\tau_{D}^{p}\right]$ and leave the trivial translation of the bounds for  $u_{p}(x)$ to the interested reader.  For a given class of domains $\mathcal{D}$, define 
\begin{equation}\label{maxEigenExpectred} 
M_{p, d}(\mathcal{D})=\sup_{D\in \mathcal{D}}G_{p, d}(D),
\end{equation}
and
\begin{equation}\label{minEigenExpectred}
m_{p, d}(\mathcal{D})=\inf_{D\in \mathcal{D}}G_{p, d}(D).
\end{equation}

Our first main result provides a sharp asymptotic upper bound for $\Mpd$ and a sharp lower for $\mpd$. 

\begin{theorem}[Sharp Lower and Asymptotic Upper Bounds in $\X$]\label{asymtotic} For $p\geq 1$, 
\begin{equation}\label{asymtotic-bound}
\lim_{d\to\infty} \frac{M_{p, d}(\X)}{d^p}=\frac{1}{4^{p}}.
\end{equation}

Moreover, if $p>0$ then 
\begin{equation}\label{LowerBound-Exittime-p} 
2^{p}\Gamma\left(p+1\right)\leq \mpd.
\end{equation}
Furthermore, \eqref{LowerBound-Exittime-p} is sharp when $p$ takes values in $\mathbb{N}$.
\end{theorem}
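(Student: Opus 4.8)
The three assertions of Theorem~\ref{asymtotic} call for rather different arguments, so I sketch each in turn.

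\emph{The lower bound \eqref{LowerBound-Exittime-p}.} I would pair the exit time against the ground state. For $D$ bounded and smooth let $\varphi_1\ge0$, $\int_D\varphi_1^2=1$, solve $-\Delta\varphi_1=\lambda_1(D)\varphi_1$, so the killed heat semigroup satisfies $P^D_t\varphi_1=e^{-\lambda_1(D)t/2}\varphi_1$. From $\mathbb{E}_x[\tau_D^p]=p\int_0^\infty t^{p-1}\mathbb{P}_x(\tau_D>t)\,dt$ and symmetry of the heat kernel,
\begin{equation*}
\int_D \mathbb{P}_x(\tau_D>t)\,\varphi_1(x)\,dx=\int_D (P^D_t\varphi_1)(y)\,dy=e^{-\lambda_1(D)t/2}\int_D\varphi_1,
\end{equation*}
and integrating $pt^{p-1}(\cdot)\,dt$ gives $\int_D \mathbb{E}_x[\tau_D^p]\,\varphi_1\,dx=2^p\Gamma(p+1)\lambda_1(D)^{-p}\int_D\varphi_1$. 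Since $\varphi_1\ge0$ this forces $\big(\sup_{x\in D}\mathbb{E}_x[\tau_D^p]\big)\ge 2^p\Gamma(p+1)\lambda_1(D)^{-p}$, i.e.\ $G_{p,d}(D)\ge 2^p\Gamma(p+1)$. For general $D\in\X$ the inequality is trivial if $\sup_x\mathbb{E}_x[\tau_D^p]=\infty$; otherwise I would exhaust $D$ by bounded smooth $D_j\uparrow D$, use $\lambda_1(D_j)\downarrow\lambda_1(D)$ and monotone convergence $\mathbb{E}_x[\tau_{D_j}^p]\uparrow\mathbb{E}_x[\tau_D^p]$, and pass to the limit.

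\emph{Sharpness of \eqref{LowerBound-Exittime-p} for $p=k\in\N$.} The restriction $d\ge2$ is essential: in one variable $G_{k,1}$ equals $\pi^{2k}\mathbb{E}_{1/2}[\tau_{(0,1)}^k]>2^k\Gamma(k+1)$ on every interval, while for $d\ge2$ one can build genuinely \emph{metastable} domains. I would take $D_n=\Omega\cup C_n$: a fixed bounded Lipschitz ``reservoir'' $\Omega$ (say the unit ball) with a thin straight channel $C_n$ of fixed length attached to $\partial\Omega$, of cross-sectional radius $1/n$, whose far end is a relatively open subset of $\partial D_n$. The three inputs, all standard ``narrow escape'' facts, are: $(i)$ $\lambda_1(D_n)\to0$; $(ii)$ $\liminf_n\lambda_2(D_n)>0$ (once the slow escape mode is removed the spectrum is governed by the fixed Neumann spectrum of $\Omega$ and the fixed Dirichlet spectrum of a channel of that length); $(iii)$ the $L^2$-normalized ground states flatten, $\|\varphi_1^{(n)}\|_{L^\infty}\|\varphi_1^{(n)}\|_{L^1}\to1$, which follows from $(i)$ since a function on $\Omega$ with Rayleigh quotient $\to0$ and negligible mass on $C_n$ is nearly constant by the Poincar\'e inequality on $\Omega$. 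Granting these, write $\mathbb{P}_x(\tau_{D_n}>t)=e^{-\lambda_1(D_n)t/2}\varphi_1^{(n)}(x)\int_{D_n}\varphi_1^{(n)}+R^{(n)}_t(x)$. By $(ii)$ and the ultracontractivity of $P^{D_n}_t$ (uniform in $n$ because $|D_n|$ is bounded and $p^{D_n}_t\le p^{\R^d}_t$) one gets $\|R^{(n)}_t\|_\infty\le Ce^{-\lambda_2(D_n)t/2}$ for $t\ge1$ and $\|R^{(n)}_t\|_\infty\le C$ otherwise, with $C$ uniform in $n$, whence $\int_0^\infty t^{k-1}\|R^{(n)}_t\|_\infty\,dt\le C'$ uniformly. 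Therefore
\begin{equation*}
\frac{2^k k!}{\lambda_1(D_n)^k}\le \sup_{x\in D_n}\mathbb{E}_x[\tau_{D_n}^k]\le \frac{2^k k!}{\lambda_1(D_n)^k}\,\|\varphi_1^{(n)}\|_{L^\infty}\!\int_{D_n}\!\varphi_1^{(n)}+kC',
\end{equation*}
the left inequality being \eqref{LowerBound-Exittime-p}. Multiplying by $\lambda_1(D_n)^k$ and invoking $(i)$ and $(iii)$ gives $G_{k,d}(D_n)\to 2^k k!=2^k\Gamma(k+1)$, so $m_{k,d}(\X)=2^k\Gamma(k+1)$.

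\emph{The asymptotic \eqref{asymtotic-bound}.} For the lower bound on $M_{p,d}(\X)$ I would use $B=B_1\subset\R^d$: $\lambda_1(B)=j_{d/2-1}^2=\tfrac{d^2}{4}(1+o(1))$, while $\mathbb{E}_0|B_t|^2=d\,t$ with $\operatorname{Var}_0(|B_t|^2)=2dt^2=o\big((\mathbb{E}_0|B_t|^2)^2\big)$, so $\tau_B$ is asymptotically deterministic with $\mathbb{E}_0[\tau_B^p]=d^{-p}(1+o(1))$, and (the sup being at the centre) $G_{p,d}(B)=\lambda_1(B)^p\mathbb{E}_0[\tau_B^p]=\tfrac{d^p}{4^p}(1+o(1))$. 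The matching upper bound $M_{p,d}(\X)\le 4^{-p}d^p(1+o(1))$ is the hard analytic part; after rescaling to normalize $\lambda_1(D)$ it amounts to a uniform heat-content estimate $\sup_x\mathbb{E}_x[\tau_D^p]\le(1+o(1))(d/4)^p\lambda_1(D)^{-p}$, whose $p=1$ instance is \eqref{VogtBound}. The obstacle I expect to carry the proof: for $p\ge2$ the naive route $\mathbb{E}_x[\tau_D^p]\le p!\,(\sup_y\mathbb{E}_y[\tau_D])^p$ fed into \eqref{VogtBound} loses a spurious factor $p!$ (that inequality is tight only for metastable domains, which by the previous paragraph have \emph{small} $G_{p,d}$), so one must instead control $\mathbb{E}_x[\tau_D^p]$ through the exponential moment $\sup_x\mathbb{E}_x[e^{\theta\tau_D}]$ as $\theta\uparrow\lambda_1(D)/2$, or through the eigenfunction expansion of $u_p$, in a way that identifies the extremal exit-time profile as the nearly deterministic ball exit time rather than an exponential one.
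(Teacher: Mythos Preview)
Your argument for the lower bound \eqref{LowerBound-Exittime-p} is correct and is essentially the paper's proof: both pair the survival probability against the ground state via $P_t^D\varphi_1=e^{-\lambda_1 t/2}\varphi_1$ and then exhaust a general $D$ by bounded subdomains.

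There is, however, a genuine error in your sharpness argument. For the domains $D_n=\Omega\cup C_n$ you describe---a fixed ball with a thin channel of \emph{fixed} length, Dirichlet conditions on all of $\partial D_n$---claim~(i) that $\lambda_1(D_n)\to 0$ is false. All the $D_n$ lie in a fixed ball of radius roughly $1+L$ (with $L$ the channel length), so $\lambda_1(D_n)$ is bounded below by a positive constant independent of $n$; in fact $\lambda_1(D_n)\to\lambda_1(\Omega)$ since the thin channel is spectrally negligible. The ``narrow escape'' phenomenon you invoke, with $\lambda_1\to 0$ and ground states flattening to constants, belongs to the \emph{mixed} problem (Neumann on most of the boundary, Dirichlet on a shrinking window), not to the pure Dirichlet problem at hand. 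Your spectral-gap and flatness claims (ii), (iii) are tailored to that wrong setting as well. The paper proceeds quite differently and more economically: it quotes the known $p=1$ sharpness (van den Berg; Henrot--Lucardesi--Philippin), i.e.\ the existence of $D_{\epsilon_n}$ with $\lambda_1(D_{\epsilon_n})\sup_x\mathbb{E}_x[\tau_{D_{\epsilon_n}}]<2+\epsilon_n$, and then bootstraps to $p=k\in\N$ via the elementary moment inequality $\mathbb{E}_x[\tau_D^k]\le k!\,(\sup_y\mathbb{E}_y[\tau_D])^k$, giving $G_{k,d}(D_{\epsilon_n})\le k!(2+\epsilon_n)^k\to 2^k k!$. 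Ironically, this is exactly the ``naive $p!$ route'' you dismiss in the last paragraph---it is useless for the \emph{upper} bound of $M_{p,d}$ but perfectly suited to the \emph{lower} sharpness of $m_{p,d}$.

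For the asymptotic \eqref{asymtotic-bound}, your lower half via the ball is fine (the paper does it with the cruder one-line Jensen bound $\mathbb{E}_0[\tau_B^p]\ge(\mathbb{E}_0[\tau_B])^p=d^{-p}$ together with $\lambda_1(B)\ge d^2/4$). But you do not actually prove the upper half; you only diagnose why the factorial bound fails and gesture toward exponential moments. The paper's mechanism is a concrete heat-semigroup estimate \`a la Vogt: one shows
\[
\mathbb{P}_x(\tau_D>t)\le C_d(\epsilon)\,e^{-(1-\epsilon)\lambda_1(D)t/2},\qquad
C_d(\epsilon)=\frac{\sqrt{2}\,e^{d/4}}{(8d)^{d/4}}\sqrt{\frac{\Gamma(d)}{\Gamma(d/2)}}\Bigl(1+\tfrac{1}{\sqrt{\epsilon}}\Bigr)^{d/2},
\]
with fully explicit $d$-dependence, then splits $\mathbb{E}_x[\tau_D^p]=p\int_0^{a/\lambda_1}+p\int_{a/\lambda_1}^\infty$ and optimizes over $a>0$ and $\epsilon\in(0,1)$. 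Tracking the constants (Theorem~4.1) yields $M_{p,d}(\X)\le 2^p\bigl(\tfrac{d}{8}+c\sqrt{d}+O(1)\bigr)^p\,C_2(d,p)$ with $C_2(d,p)\to 1$, hence $\limsup_d M_{p,d}(\X)/d^p\le 4^{-p}$. This explicit $L^\infty\!\to\!L^\infty$ bound with optimized constants is the missing ingredient in your sketch.
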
 

The upper bound asymptotic \eqref{asymtotic-bound} is accomplished by working on precise universal upper bounds for $G_{p, d}$ (see Section \ref{Sec:3}, more precisely, Theorems \ref{thm-main-thm} and \ref{thm:SharpUpper}). The lower bound \eqref{LowerBound-Exittime-p} has been independently obtained by Biswas-L\H{o}rinczi in \cite{Biswas-Lorinczi}, but only for the restricted class of general convex domains. We prove the lower bound holds for any domain as long as the bottom of spectrum is positive. Moreover, our result proves sharpness for any integer $p$. 
\\


%
%
%
%
%
%
%


Our second main result concerns the existence of extremals for  classes of domains. Given the isoperimetric inequalities \eqref{egen1} and \eqref{Lifetime} (as well as other inequalities where balls are extremals in $\mathcal{X}$), one could speculate about the maximality of a ball $B$ for these extremal problems.  However it was pointed out in \cite[pg. 599]{Banuelos-Carrol1994} that 
\begin{equation}\label{B-vs-T}
\lambda_1\left(B\right)\sup_{x\in B}\E_{x}\left[\tau_{B}\right]< \lambda_1\left(\T\right)\sup_{x\in \T}\E_{x}\left[\tau_{\T}\right],
\end{equation}
where $\T$ is is the equilateral triangle. 
%
%
%
The existence of maximizers in the class of convex domains is proved in  \cite{Henrot-Lucardesi-Philippin-2018}. In the same paper the authors conjecture that when 
$d=2$, the equilateral triangle $\T$ is an extremal for $M_{1,2}(\X)$. 

In this paper we are interested in the extremals, particularly their existence, for the shape functional $G_{p,d}$ among the class of bounded convex domains that are \textbf{doubly symmetric} (symmetric with respect to the both coordinate axes). 
There have been many interesting problems concerning the geometry of the Laplacian in such domains and substantial progress  has been made. We refer the readers to some of this large literature \cite{Banuelos-Burdzy-1999, Banuelos-Pang-Pascu-2004,Pascu-2002,Jerison-Nadirashvili-2000, Banuelos-Kulczycki2006,Banuelos-Mendez-Hernandez-2000,Davis-2001, Andrews-Clutterbuck-2011}.

\begin{df}
Let $\mathcal{C}$ be the class of bounded convex domains
in $\mathbb{R}^{d}$, $d\geq 2$. Let $\mathcal{SC}$ be the subclass of domains in $\mathcal{C}$ 
that are symmetric
with respect to each coordinate axis.
\end{df}


We obtain the following result. 

\begin{thm}[Existence of extremals in $\mathcal{C}$ or $\mathcal{SC}$]\label{Thm:Existence}
For any $p>0$, $d\ge2$, the upper bounds $M_{p,d}(\cC)$ and $M_{p,d}(\SC)$ admit extremals. 
\end{thm}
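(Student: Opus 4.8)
The plan is to use the direct method in the calculus of variations, working with a shape functional that is scale-invariant so that the supremum can be sought over a compact family of normalized domains. Since $G_{p,d}(D) = \lambda_1^p(D)\sup_{x\in D}\mathbb{E}_x[\tau_D^p]$ is invariant under dilations of $D$ (because $\lambda_1$ scales like $(\text{length})^{-2}$ while $\mathbb{E}_x[\tau_D^p]$ scales like $(\text{length})^{2p}$), I may normalize, say by fixing the inradius or the diameter, or by insisting the John ellipsoid is comparable to the unit ball. The first step is therefore to take a maximizing sequence $D_n \in \mathcal{C}$ (resp.\ $\mathcal{SC}$) with $G_{p,d}(D_n) \to M_{p,d}(\mathcal{C})$, normalized so that each $D_n$ contains a ball of radius $1$ and so that $\sup_n \operatorname{diam}(D_n) < \infty$. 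The key point that makes the normalization legitimate is that a maximizing sequence cannot have diameters tending to infinity: for a long thin convex set one has $\lambda_1(D) \gtrsim 1/w^2$ where $w$ is the smallest width, while $\sup_x \mathbb{E}_x[\tau_D^p]$ can be controlled by the width in at least one direction, and one checks $G_{p,d}$ stays bounded — alternatively, one may invoke the already-proven universal upper bound for $G_{p,d}$ (Theorem~\ref{thm-main-thm} / Theorem~\ref{thm:SharpUpper}) to see that $M_{p,d}(\mathcal{C})$ is finite, and then rule out degeneration directly.

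The second step is compactness. Convex bodies with inradius $\geq 1$ and diameter $\leq R$ form a compact family in the Hausdorff metric (Blaschke selection theorem), so after passing to a subsequence $D_n \to D_\infty$ in the Hausdorff sense, with $D_\infty$ a bounded convex domain with nonempty interior; moreover symmetry with respect to the coordinate hyperplanes is preserved under Hausdorff limits, so if the $D_n \in \mathcal{SC}$ then $D_\infty \in \mathcal{SC}$. The third step is continuity of the two ingredients of $G_{p,d}$ under this convergence. For $\lambda_1$, Hausdorff convergence of convex domains gives convergence of the first Dirichlet eigenvalue (this is standard — convex, hence Lipschitz, domains converging in Hausdorff distance have stable spectra; one can cite Mosco convergence of $H^1_0$ or the continuity results for eigenvalues on uniformly Lipschitz domains). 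For the exit-time moment, I would show $D \mapsto \sup_{x\in D}\mathbb{E}_x[\tau_D^p]$ is continuous on this compact family: writing $v_p^D(x) = \mathbb{E}_x[\tau_D^p]$, the functions $v_p^{D_n}$ solve the iterated torsion PDE \eqref{p-Torsion-PDE} (for integer $p$; for general $p$ one works with $u_p$ or approximates), are uniformly bounded and equicontinuous up to the boundary on the uniformly Lipschitz family, hence converge locally uniformly to $v_p^{D_\infty}$, and the suprema converge. Combining, $G_{p,d}(D_n) \to G_{p,d}(D_\infty)$, so $G_{p,d}(D_\infty) = M_{p,d}(\mathcal{C})$ and $D_\infty$ is the sought extremal; the $\mathcal{SC}$ case is identical.

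The main obstacle I anticipate is the continuity (or at least upper semicontinuity in the right direction) of $D \mapsto \sup_x \mathbb{E}_x[\tau_D^p]$ under Hausdorff convergence of convex domains, together with making sure the location of the maximizing point does not escape to the boundary in a pathological way. Two facts should handle this cleanly: first, a uniform Lipschitz (indeed, uniform cone) condition holds along the normalized maximizing sequence because inradius and diameter are pinched, which yields uniform boundary regularity, barrier estimates, and hence equicontinuity of the $v_p^{D_n}$ up to $\partial D_n$; second, Hausdorff convergence of convex sets implies convergence of indicator functions in $L^1$ and Mosco convergence of the Sobolev spaces, which transfers convergence of the torsion functions and their iterates. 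For non-integer $p$ one reduces to the torsion-moment function $u_p$ via \eqref{p-torsion}, or notes that $p \mapsto (\mathbb{E}_x[\tau_D^p])^{1/p}$ and the relevant estimates pass through by interpolation/monotonicity; alternatively one proves the result first for $p \in \mathbb{N}$ using \eqref{p-Torsion-PDE} and then extends. Once continuity is in hand, the direct method closes the argument with no further difficulty.
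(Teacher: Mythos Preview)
Your overall scheme---scale invariance, maximizing sequence, Blaschke selection, preservation of symmetry under Hausdorff limits, and continuity of both $\lambda_1$ and $\mathcal{M}_p$---is exactly the scaffold the paper uses, and the non-degeneracy step (your diameter bound with fixed inradius versus the paper's nonempty-interior claim with fixed outer box) are dual formulations of the same obstruction; the paper simply cites \cite{Henrot-Lucardesi-Philippin-2018} for it, so your sketch there is on equal footing.

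Where your proposal genuinely diverges is in the proof of continuity of $\mathcal{M}_p(D)=\sup_{x\in D}\mathbb{E}_x[\tau_D^p]$. You suggest a PDE route: the iterated torsion hierarchy \eqref{p-Torsion-PDE}, uniform Lipschitz bounds along the sequence, equicontinuity, and Mosco convergence. The paper instead proves a quantitative probabilistic estimate (Lemma~\ref{Lem:Existence1a}): for $U\subset D$ with $D$ bounded Lipschitz, the strong Markov property gives $\mathbb{E}_x[(\tau_D-\tau_U)^p]\le \sup_{z\in\partial U}\mathbb{E}_z[\tau_D^p]$, and then intrinsic ultracontractivity of the Dirichlet heat kernel on Lipschitz domains is used to dominate $\mathbb{E}_z[\tau_D^p]$ by $C_{p,D}\,\mathbb{E}_z[\tau_D]\le C_{p,D}\,d(z,\partial D)^\beta$. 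Coupled with the nesting $t_nD_n\subset D$ and the scaling identity $\mathcal{M}_p(tD)=t^{2p}\mathcal{M}_p(D)$, this yields continuity in one stroke for \emph{every} $p>0$. Your PDE approach is essentially the \cite{Henrot-Lucardesi-Philippin-2018} argument and works cleanly for $p\in\mathbb{N}$, but your suggested extensions to non-integer $p$ (interpolation, ``prove for integers and extend'') are not developed; note in particular that existence of extremals for integer $p$ says nothing about non-integer $p$, since the extremal domain may depend on $p$. The paper's use of intrinsic ultracontractivity is precisely the device that bypasses the PDE hierarchy and treats all $p>0$ uniformly---that is the key lemma you are missing.
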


\begin{remark} The case for $p=1$ and the class $\mathcal{C}$ is proved in \cite{Henrot-Lucardesi-Philippin-2018}. Our  proof of Theorem \ref{Thm:Existence} depends  on a key estimate (Lemma \ref{Lem:Existence1a}) which estimates the $p-$moment of the difference $(\tau_{D}-\tau_{U})$, where $U\subset D\subset \R^d$ and $D$ is a bounded Lipschitz domain.   
 This in turn will allow us to show in Proposition \ref{prop:Existence2} that $G_{p, d}$ is continuous with respect to the Hausdorff distance. This is quite different from the proof in the special case in \cite{Henrot-Lucardesi-Philippin-2018,Henrot-Pierre-2018} which uses purely PDE techniques.   
\end{remark}

%
%



The paper is organized as follows. Upper bounds are contained  in Sections \ref{Sec:3},\ref{Sec:4} in Theorems \ref{MAIN-UpperBound-Theorem} and \ref{thm:SharpUpper}. The proof of Theorem \ref{asymtotic} is split into Sections \ref{Sec:4} and \ref{sec:LowerBound}. The proof of the asymptotic upper bound \eqref{asymtotic-bound} is given in  Section \ref{Sec:4}. The proof of the lower bound \eqref{LowerBound-Exittime-p} is given in  Section \ref{sec:LowerBound}. In Section \ref{Sec:Extremal} we discuss the problems of finding extremal domains for $G_{p,d}$  restricted to various subclasses of domains.  The proof of Theorem \ref{Thm:Existence} is given in Sections \ref{Sec:5.3} and \ref{Sec:5.4}.  This section also contains a conjecture on the extremal domain for the class  $\mathcal{SC}$, Conjecture \ref{Conjecture-Double-Symmetric}.

\section{Upper Bounds for $\Mpd$}\label{Sec:3}

In this section we obtain some preliminary  upper bound estimates that will allow us to prove the sharp asymptotic upper bound for $\Mpd$, which will be done in Section \ref{Sec:4}.

Let $K_{D}\left(x,y,t\right)$ be the {\it Dirichlet heat kernel} for $\Delta_D$ in the domain $D$.  The transition density $p_{D}$  for Brownian
motion killed upon leaving  $D$ is given by
\[
p_{D}\left(x,y,t\right)=K_{D}\left(x,y,t/2\right),
\]
as $\frac{1}{2} \Delta_D$ is the generator of Brownian motion. We can then write 
\begin{align*}
\mathbb{E}\left[\tau_{D}^{p}\right] & =p\int_{0}^{\infty}t^{p-1}\mathbb{P}_{x}\left(\tau_{D}>t\right)dt=p\int_{0}^{\infty}\int_{D}t^{p-1}p_{D}\left(x,y,t\right)dydt\\
 & =2^{p}p\int_{0}^{\infty}\int_{D}s^{p-1}K_{D}\left(x,y,s\right)dyds.
\end{align*}
We also recall the classical  upper incomplete gamma function
\begin{align*}
\Gamma\left(s,x\right) & =\int_{x}^{\infty}u^{s-1}e^{-u}du.
\end{align*}
 
\begin{thm}\label{thm-main-thm}
\label{MAIN-UpperBound-Theorem}
For any $p>0$, we have 
\begin{equation}
\Mpd\leq 2^p\Gamma(p+1)C_{1}(d,p),\label{eq:VOGTImpovement1}
\end{equation}
where
\begin{align}
 & C_{1}\left(d,p\right):=\nonumber\\
 & \inf_{a>0,0<\epsilon<1}\left\{\frac{a^{p}}{2^{p}\Gamma\left(p+1\right)}+\frac{1}{\Gamma\left(p\right)}\frac{e^{d/4}\sqrt{2}}{\left(8d\right)^{d/4}}\sqrt{\frac{\Gamma\left(d\right)}{\Gamma\left(d/2\right)}}\left(1+\frac{1}{\sqrt{\epsilon}}\right)^{d/2}\frac{\Gamma\left(p,\left(1-\epsilon\right)a/2\right)}{\left(1-\epsilon\right)^{p}}\right\}. \label{eq:C2}
\end{align}
\end{thm}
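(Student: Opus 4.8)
The plan is to bound $\mathbb{E}_x[\tau_D^p]$ using the representation $\mathbb{E}_x[\tau_D^p]=2^p p\int_0^\infty\int_D s^{p-1}K_D(x,y,s)\,dy\,ds$ already derived above, splitting the time integral at a threshold $a>0$. On the interval $(0,a)$ we use the crudest possible bound, $\int_D K_D(x,y,s)\,dy\le 1$, which yields the contribution $2^p p\int_0^a s^{p-1}\,ds = 2^p a^p$; dividing by $2^p\Gamma(p+1)$ produces the first term $a^p/(2^p\Gamma(p+1))$ in $C_1(d,p)$. On the tail $(a,\infty)$ we need a pointwise-in-$s$ decay estimate for $\int_D K_D(x,y,s)\,dy$ that brings in $\lambda_1(D)$; the natural one is the spectral bound $\int_D K_D(x,y,s)\,dy\le C e^{-\lambda_1(D)s}$ with an explicit constant $C=C(d)$ coming from an on-diagonal heat kernel estimate (e.g. comparing $K_D$ with the free Gaussian kernel and using $\int_D K_D(x,y,s)\,dy\le (K_D(x,x,s))^{1/2}\,\|1\|$-type Cauchy--Schwarz arguments, together with the eigenfunction expansion $K_D(x,x,s)=\sum_k e^{-\lambda_k s}\varphi_k(x)^2$ and the semigroup property to factor out $e^{-\lambda_1 s}$).

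The key input I would reach for is a Gaussian-type on-diagonal bound of the form $K_D(x,x,t)\le (\text{something})\, t^{-d/2}$ valid for small $t$, combined with the semigroup identity $K_D(x,x,2t)=\int_D K_D(x,y,t)^2\,dy \ge \frac{1}{|D|}(\int_D K_D(x,y,t)\,dy)^2$ is the wrong direction, so instead I would write, for $s>a$, $\int_D K_D(x,y,s)\,dy \le e^{-\lambda_1(s-a)}\int_D K_D(x,y,a)\,dy \le e^{-\lambda_1(s-a)} \big(K_D(x,x,a)\big)^{1/2}|D|^{1/2}$? That reintroduces $|D|$, which must be eliminated. The cleaner route, and the one the constant $C_1$ clearly reflects, is: fix $0<\epsilon<1$, write $s = \epsilon s + (1-\epsilon)s$, use $\int_D K_D(x,y,s)\,dy \le e^{-\lambda_1 (1-\epsilon)s}\int_D K_D(x,y,\epsilon s)\,dy$, and then bound $\int_D K_D(x,y,\epsilon s)\,dy \le \|K_D(x,\cdot,\epsilon s/2)\|_2 \cdot \|K_D(\cdot, y, \epsilon s/2)\|_{?}$ — more precisely use $\int_D K_D(x,y,r)\,dy \le \big(\int_D K_D(x,y,r)^2\,dy\big)^{1/2} |D|^{1/2}$ is again bad. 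So the genuine mechanism must be the dimension-free bound $\int_D K_D(x,y,r)\,dy \le \mathbb{P}_x(\tau_{\mathbb{R}^d}>r)=1$ refined by the ball comparison from \eqref{Lifetime2}: $\int_D K_D(x,y,r)\,dy \le \mathbb{P}_0(\tau_{D^*}>r/2)$ where $D^*$ has the volume of $D$ — but since $\lambda_1(D^*)\le\lambda_1(D)$ only by Faber--Krahn we would lose, not gain. I therefore expect the actual argument to use the explicit bound $\int_D K_D(x,y,r)\,dy\le \Big(\frac{e}{d\,r\,\lambda_1(D)}\Big)^{d/4}\cdot(\text{const})$ or similar, derived by combining the trivial $L^\infty\to L^2$ heat kernel bound with Faber--Krahn: $\sup_x K_D(x,x,r)\le$ free Gaussian on-diagonal value $=(4\pi r)^{-d/2}$ won't involve $\lambda_1$, whereas the sharp statement needs $\lambda_1$-dependence, presumably via $K_D(x,x,r)\le e^{-\lambda_1 r}(4\pi r)^{-d/2}$ is false in general, but $K_D(x,x,r)\le C(d)\lambda_1^{d/2} e^{-\lambda_1 r/2}$-type bounds for $r$ bounded below do hold and are scale-invariant — this scale-invariance is why $C_1$ depends only on $d,p$. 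I would then plug $\int_D K_D(x,y,s)\,dy \le (\text{const}(d))\,\lambda_1^{d/2}\, r^{?}\, e^{-\lambda_1(1-\epsilon)s}$ with $r=\epsilon s$ into the tail integral, substitute $u=(1-\epsilon)\lambda_1 s/2$, and recognize the resulting integral as $\frac{\Gamma(p,(1-\epsilon)a\lambda_1/2)}{((1-\epsilon)\lambda_1/2)^p}$ — after multiplying by $\lambda_1^p$ the $\lambda_1$-dependence cancels, leaving exactly $\frac{1}{(1-\epsilon)^p}\Gamma(p,(1-\epsilon)a/2)$ modulo the factor $\frac{1}{\Gamma(p)}$ from normalizing $2^p p$ against $2^p\Gamma(p+1)$ and the dimensional constant $\frac{e^{d/4}\sqrt2}{(8d)^{d/4}}\sqrt{\Gamma(d)/\Gamma(d/2)}\,(1+1/\sqrt\epsilon)^{d/2}$, which should emerge from the explicit on-diagonal/$L^1$ heat-kernel estimate and the $(1+1/\sqrt\epsilon)$ factor from splitting $s$ and applying a Gaussian spreading bound.

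Concretely, the steps in order are: (1) recall the identity for $\mathbb{E}_x[\tau_D^p]$ and split $\int_0^\infty = \int_0^a + \int_a^\infty$ with $a>0$ a free parameter to be optimized; (2) bound the $(0,a)$ piece by $1$ under the $y$-integral to get $2^p a^p$; (3) for the $(a,\infty)$ piece, establish the key lemma giving $\int_D K_D(x,y,s)\,dy \le \frac{\sqrt2\, e^{d/4}}{(8d)^{d/4}}\sqrt{\frac{\Gamma(d)}{\Gamma(d/2)}}\,\big(1+\tfrac1{\sqrt\epsilon}\big)^{d/2}\,\lambda_1(D)^{d/2}\, e^{-(1-\epsilon)\lambda_1(D)s/2}\cdot(\text{appropriate }s\text{-power, likely absorbed})$, the scale-invariant heat content estimate; (4) insert into the tail integral, change variables to turn it into an upper incomplete gamma function, and collect constants; (5) multiply by $\lambda_1(D)^p$, observe all $\lambda_1$ powers cancel, divide by $2^p\Gamma(p+1)$, and take the infimum over $a>0$ and $0<\epsilon<1$ to obtain \eqref{eq:VOGTImpovement1}--\eqref{eq:C2}.

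The main obstacle is step (3): producing the sharp, fully explicit, $\lambda_1$-dependent and scale-invariant bound on the heat content $\int_D K_D(x,y,s)\,dy$ with precisely the stated constant. This requires combining the eigenfunction expansion, the semigroup property to split off $e^{-(1-\epsilon)\lambda_1 s/2}$ while retaining $\int_D K_D(x,y,\epsilon s/2)^2\,dy$ type quantities, a sharp $L^1$--$L^2$ or $L^\infty$ heat-kernel estimate for the remaining short-time factor (where the $(8d)^{-d/4}$, $e^{d/4}$, and $\sqrt{\Gamma(d)/\Gamma(d/2)}$ come from optimizing a Gaussian-against-ball comparison à la \eqref{Lifetime2} at the ball of radius $\sim 1/\sqrt{\lambda_1}$), and the Cauchy--Schwarz step that introduces $(1+1/\sqrt\epsilon)^{d/2}$ from controlling a convolution of two heat kernels at unequal times. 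Everything after that is a routine incomplete-gamma substitution and bookkeeping. I would isolate step (3) as its own lemma and prove it first.
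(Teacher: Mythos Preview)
Your skeleton is exactly the paper's: write $\mathbb{E}_x[\tau_D^p]=p\int_0^\infty t^{p-1}\mathbb{P}_x(\tau_D>t)\,dt$, split the integral at a threshold, bound the short-time piece trivially by $1$, and on the tail use an exponential-decay bound for $\mathbb{P}_x(\tau_D>t)$ that involves $\lambda_1$. Two minor points: the paper splits at $a/\lambda_1$ rather than at a fixed $a$, which is what makes the first term come out as $a^p/\lambda_1^p$ and hence become $a^p$ after multiplying by $\lambda_1^p$; and the paper works directly with $\mathbb{P}_x(\tau_D>t)$ rather than the heat-kernel form, though the two are equivalent up to the $t\mapsto t/2$ rescaling.

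The genuine gap is exactly where you flag it: step (3). None of the mechanisms you propose --- eigenfunction expansion plus semigroup splitting, Faber--Krahn ball comparison, or Cauchy--Schwarz with $|D|^{1/2}$ --- produces the stated constant, and you correctly discard most of them yourself. The paper's route is completely different and is the one thing you did not guess: it is Vogt's Davies-type weighted operator-norm argument. One considers the shifted operator $H=-\Delta_D-\lambda_1$ and the twisted semigroup $e^{-\alpha\rho_w}e^{-tH}e^{\alpha\rho_w}$, where $\rho_w(x)=|x-w|$. A Gaussian upper bound on $K_D$ yields an explicit $L^2\to L^\infty$ bound for this twisted operator (this is where the $\epsilon$ and the $(1+1/\sqrt\epsilon)^{d/2}$ enter, via optimizing an auxiliary parameter $\beta=\epsilon^{-1/2}$). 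Then a short lemma converts such $L^2\to L^\infty$ bounds, uniform in $w$, into an $L^\infty\to L^\infty$ bound for $e^{-tH}$ itself, via $\|Lf\|_\infty\le\sup_w\|e^{-\alpha\rho_w}\|_2\,\|f\|_\infty$; the explicit computation of $\|e^{-\alpha\rho_w}\|_2$ as a radial Gaussian-type integral is what produces the factor $\sqrt{2}\,\pi^{d/4}(2\alpha)^{-d/2}\sqrt{\Gamma(d)/\Gamma(d/2)}$, and optimizing $\alpha$ brings in $e^{d/4}/(8d)^{d/4}$. The upshot is the pointwise estimate
\[
\mathbb{P}_x(\tau_D>t)\le \frac{e^{d/4}\sqrt 2}{(8d)^{d/4}}\sqrt{\frac{\Gamma(d)}{\Gamma(d/2)}}\Big(1+\frac{1}{\sqrt\epsilon}\Big)^{d/2}e^{-(1-\epsilon)\lambda_1 t/2},
\]
after which your step (4) (substitute $u=(1-\epsilon)\lambda_1 t/2$ to get the incomplete gamma) is exactly right. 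So the missing idea is the Davies exponential-weight trick; once you have it, the rest of your outline goes through verbatim.
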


\begin{proof} 
For any $D\in \X$, since  $\mathbb{E}\left[\tau_{D}^{p}\right]=\int_0^\infty t^{p-1}\P(\tau_D>t)dt$. We consider splitting the integral at the bottom of the spectrum $\lambda_1$ of the Dirichlet Laplacian.
Precisely, for any $x\in D$ and $a>0$ we have
\begin{align}\label{eq-E-tau-p}
\mathbb{E}_x\left[\tau_{D}^{p}\right]
 & =p\int_{0}^{a/\lambda_{1}}t^{p-1}\mathbb{P}_{x}\left(\tau_{D}>t\right)dt+p\int_{a/\lambda_{1}}^{\infty}t^{p-1}\mathbb{P}_{x}\left(\tau_{D}>t\right)dt\nonumber \\
 & \leq \frac{a^{p}}{\lambda_1^p}+p\int_{a/\lambda_1}^{\infty}t^{p-1}\mathbb{P}_{x}\left(\tau_{D}>t\right)dt
\end{align}
Let
$
I=\int_{a/\lambda_{1}}^{\infty}t^{p-1}\mathbb{P}_{x}\left(\tau_{D}>t\right)dt.
$
The theorem is proved upon obtaining the estimate for $I$ that we give in the next lemma.
\end{proof}

\begin{lemma}\label{lemma-EstimateIII}
For any $x\in D$, $a>0$, we have 
\begin{equation}\label{eq-II-delta}
I\le 2^{p}\frac{e^{d/4}\sqrt{2}}{\left(8d\right)^{d/4}}\sqrt{\frac{\Gamma\left(d\right)}{\Gamma\left(d/2\right)}}\left(1+\frac{1}{\sqrt{\epsilon}}\right)^{d/2}\frac{\Gamma\left(p,\left(1-\epsilon\right)a/2\right)}{\left(1-\epsilon\right)^{p}\lambda_{1}^{p}}.
\end{equation}
\end{lemma}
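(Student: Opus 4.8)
The plan is to bound $\P_x(\tau_D > t)$ pointwise by a quantity that decays like $e^{-\lambda_1 t/2}$ with an explicit constant, and then integrate against $t^{p-1}$ over $[a/\lambda_1,\infty)$ to produce the incomplete gamma function. The starting point is the eigenfunction expansion of the Dirichlet heat kernel: writing $\P_x(\tau_D>t) = \int_D K_D(x,y,t/2)\,dy$ and using $K_D(x,y,s)=\sum_k e^{-\lambda_k s}\varphi_k(x)\varphi_k(y)$, one gets by Cauchy--Schwarz in $y$ (or directly by the semigroup/Chapman--Kolmogorov identity $K_D(x,x,s) = \int_D K_D(x,y,s/2)^2\,dy$ together with monotonicity) a bound of the shape
\begin{equation*}
\P_x(\tau_D > t) \;\le\; \sqrt{|D|}\,\sqrt{K_D(x,x,t/2)} \;\le\; C(\epsilon,d)\, e^{-(1-\epsilon)\lambda_1 t/2}
\end{equation*}
for any $\epsilon\in(0,1)$, after using the on-diagonal heat kernel bound $K_D(x,x,s)\le (4\pi s)^{-d/2}$ to control the short-time part and peeling off one factor $e^{-\epsilon\lambda_1 s}$ (this is where the $(1+1/\sqrt\epsilon)^{d/2}$, the $e^{d/4}$, the $(8d)^{-d/4}$, and the ratio $\sqrt{\Gamma(d)/\Gamma(d/2)}$ all enter — they come from optimizing the trade-off $\inf_{s>0}\{s^{-d/2}e^{-\epsilon\lambda_1 s}\}$ against a volume bound like $|D|\le$ something in terms of $\lambda_1$, e.g.\ via the Faber--Krahn inequality $\lambda_1(D)\ge \lambda_1(D^*)$ which gives $|D|$ a power of $d/\lambda_1$).

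Granting the pointwise bound, the lemma follows by a direct computation: substitute $u = (1-\epsilon)\lambda_1 t/2$ so that
\begin{equation*}
I = \int_{a/\lambda_1}^\infty t^{p-1}\P_x(\tau_D>t)\,dt \;\le\; C(\epsilon,d)\int_{a/\lambda_1}^\infty t^{p-1} e^{-(1-\epsilon)\lambda_1 t/2}\,dt \;=\; C(\epsilon,d)\,\frac{2^p}{(1-\epsilon)^p\lambda_1^p}\,\Gamma\!\left(p,\tfrac{(1-\epsilon)a}{2}\right),
\end{equation*}
where the lower limit $t=a/\lambda_1$ turns into $u = (1-\epsilon)a/2$, exactly matching the right-hand side of \eqref{eq-II-delta}. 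So the only real content is identifying the constant $C(\epsilon,d) = 2^p\,e^{d/4}\sqrt2\,(8d)^{-d/4}\sqrt{\Gamma(d)/\Gamma(d/2)}\,(1+1/\sqrt\epsilon)^{d/2}\big/2^p$; I would reverse-engineer which heat-kernel inequality and which volume bound produce precisely those factors, the likely candidates being the combination of $K_D(x,x,s)\le(4\pi s)^{-d/2}$, the eigenvalue lower bound $\lambda_1 \ge \tfrac{j_{d/2-1,1}^2}{R^2}$ on the inradius/volume side via Faber--Krahn, and the elementary optimization $\inf_{s>0} s^{-d/2}e^{-cs} = (d/(2ec))^{d/2}$.

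The main obstacle I anticipate is exactly the bookkeeping of the dimensional constant: getting the $\epsilon$-dependence to come out as $(1+1/\sqrt\epsilon)^{d/2}$ rather than, say, $\epsilon^{-d/2}$ suggests the argument does not simply split $e^{-\lambda_1 s} = e^{-(1-\epsilon)\lambda_1 s}e^{-\epsilon\lambda_1 s}$ and crudely bound the second factor, but instead uses a sharper comparison — perhaps comparing $K_D(x,x,t/2)$ at time $t/2$ with its value at the slightly earlier time $(1-\epsilon)t/2$ or $t/(2(1+\sqrt\epsilon))$ and exploiting the semigroup property, so that the "lost" time is what carries the Gaussian factor. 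Pinning down that step, and making sure the volume factor $\sqrt{\Gamma(d)/\Gamma(d/2)}\,(8d)^{-d/4}e^{d/4}$ is handled with the correct (Stirling-free, exact) constants from the Faber--Krahn comparison with a ball, is where the care is needed; everything downstream is the routine substitution displayed above.
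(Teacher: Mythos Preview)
Your integration step is exactly right: once one has the pointwise bound
\[
\P_x(\tau_D>t)\;\le\; C(\epsilon,d)\,e^{-(1-\epsilon)\lambda_1 t/2},
\qquad
C(\epsilon,d)=e^{d/4}\frac{\sqrt{2}}{(8d)^{d/4}}\sqrt{\frac{\Gamma(d)}{\Gamma(d/2)}}\Big(1+\frac{1}{\sqrt{\epsilon}}\Big)^{d/2},
\]
the substitution $u=(1-\epsilon)\lambda_1 t/2$ immediately gives the incomplete gamma, and this is precisely how the paper finishes. The gap is entirely in how you propose to obtain the pointwise bound.

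Your route via $\P_x(\tau_D>t)\le \sqrt{|D|}\sqrt{K_D(x,x,t/2)}$ together with $K_D(x,x,s)\le(4\pi s)^{-d/2}$ and a Faber--Krahn control of $|D|$ will not work here, for two reasons. First, the class $\mathcal{X}$ consists of all domains with $\lambda_1(D)>0$, which includes domains of infinite volume (an infinite slab, for instance), so the Cauchy--Schwarz step $\int_D K_D(x,y,t/2)\,dy\le |D|^{1/2}K_D(x,x,t/2)^{1/2}$ is vacuous. Second, even on finite-volume domains the constants you would extract from Faber--Krahn and Bessel zeros are not the ones in the statement: the factor $\sqrt{\Gamma(d)/\Gamma(d/2)}$ does not come from the volume of a ball or from $j_{d/2-1,1}$, and the $(1+1/\sqrt{\epsilon})^{d/2}$ does not arise from the simple split $e^{-\lambda_1 s}=e^{-(1-\epsilon)\lambda_1 s}e^{-\epsilon\lambda_1 s}$ followed by $\inf_s s^{-d/2}e^{-\epsilon\lambda_1 s}$.

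What the paper actually uses is Proposition~\ref{VogThm2.1-Improvement}, which is a Davies--type weighted $L^\infty\to L^\infty$ estimate for the Dirichlet heat semigroup in the spirit of Vogt. One works with the shifted operator $H=-\Delta_D-\lambda_1$, uses the off-diagonal Gaussian bound on $K_D$ to control $\|e^{-\alpha\rho_w}e^{-tH}e^{\alpha\rho_w}\|_{2\to\infty}$, and then invokes Lemma~\ref{Vogt-2.5-Improvement}, whose proof computes $\|e^{-\alpha\rho_w}\|_{L^2(\R^d)}^2=\sigma_{d-1}\Gamma(d)/(2\alpha)^d$; this is the true source of the $\sqrt{\Gamma(d)/\Gamma(d/2)}$. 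The $(1+1/\sqrt{\epsilon})^{d/2}$ then drops out of optimizing a parameter $\beta$ in Vogt's inequality by taking $\beta=\epsilon^{-1/2}$. No volume bound and no Faber--Krahn enters.
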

The proof of the above lemma relies on some improvement of  Vogt's result in \cite{Vogt-2019a}. We split  the major steps into the lemma and proposition below. 

\begin{lemma}
\label{Vogt-2.5-Improvement}Let $D\subset\mathbb{R}^{d}$ be measurable,
$\alpha>0,$ and let $L$ be a bounded operator on $L^2(D)$
satisfying 
\begin{equation}
\left\Vert e^{-\alpha\rho_{w}}Le^{\alpha\rho_{w}}\right\Vert _{2\to\infty}\leq1\label{Vogt2.5-1}
\end{equation}
for all $w\in D$, where  $\rho_{w}\left(x\right)=\left|x-w\right|,w\in\mathbb{R}^{d}$. Then 
\begin{equation}
\left\Vert L\right\Vert _{\infty\to\infty}\leq\frac{\sqrt{2}\pi^{d/4}}{\left(2\alpha\right)^{d/2}}\sqrt{\frac{\Gamma\left(d\right)}{\Gamma\left(d/2\right)}}\label{Vogt2.5-2}
\end{equation}
\end{lemma}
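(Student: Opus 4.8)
The plan is to follow the strategy in Vogt's paper: reduce the operator norm bound on $L$ to a computation involving Gaussian integrals against the auxiliary multiplier $e^{-\alpha\rho_w}$, and then optimize over $w$. First I would unpack the hypothesis \eqref{Vogt2.5-1}: for every $w\in D$ and every $f\in L^2(D)$,
$$
\bigl\| e^{-\alpha\rho_w} L (e^{\alpha\rho_w} f)\bigr\|_\infty \le \|f\|_2.
$$
Equivalently, writing $g = e^{\alpha\rho_w} f$, we get $\|e^{-\alpha\rho_w} L g\|_\infty \le \|e^{-\alpha\rho_w} g\|_2$ for all $g$. Taking the supremum of $\|Lg\|_\infty$ thus requires us to control $\|g\|_\infty$ (or rather $\|Lg\|_\infty$) by $\|g\|_2$. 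The key point is that the $L^\infty$ norm of $Lg$ is attained (or approached) at some point $w \in D$; at that point one can afford to lose the exponential weight, since $e^{-\alpha\rho_w(w)} = e^{0} = 1$. Concretely, for $x_0 \in D$ with $|Lg(x_0)|$ close to $\|Lg\|_\infty$, apply the hypothesis with $w = x_0$:
$$
\|Lg\|_\infty \approx |Lg(x_0)| = e^{-\alpha\rho_{x_0}(x_0)}|Lg(x_0)| \le \bigl\|e^{-\alpha\rho_{x_0}} Lg\bigr\|_\infty \le \bigl\|e^{-\alpha\rho_{x_0}} g\bigr\|_2 \le \|e^{-\alpha\rho_{x_0}}\|_\infty \cdot \|g\|_2 = \|g\|_2 \cdot 1.
$$

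That naive bound gives constant $1$ rather than $\tfrac{\sqrt 2\,\pi^{d/4}}{(2\alpha)^{d/2}}\sqrt{\Gamma(d)/\Gamma(d/2)}$, so the real content must be sharper. I would instead argue as follows: fix $g$ with $\|g\|_\infty = 1$ (work with $\|L\|_{\infty\to\infty}$ directly, so $\|g\|_\infty\le 1$), and estimate $\|Lg\|_\infty$. For any $w\in D$,
$$
|Lg(w)| = e^{-\alpha\rho_w(w)}|Lg(w)| \le \bigl\|e^{-\alpha\rho_w} L g\bigr\|_\infty = \bigl\|e^{-\alpha\rho_w} L(e^{\alpha\rho_w}\cdot e^{-\alpha\rho_w} g)\bigr\|_\infty \le \bigl\|e^{-\alpha\rho_w} g\bigr\|_2,
$$
using \eqref{Vogt2.5-1} with the function $f = e^{-\alpha\rho_w} g$, which has $\|f\|_2 = \|e^{-\alpha\rho_w}g\|_2$. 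Now since $\|g\|_\infty\le 1$,
$$
\bigl\|e^{-\alpha\rho_w} g\bigr\|_2^2 = \int_D e^{-2\alpha|x-w|}|g(x)|^2\,dx \le \int_{\R^d} e^{-2\alpha|x-w|}\,dx = \int_{\R^d} e^{-2\alpha|y|}\,dy.
$$
The last integral is independent of $w$ and is a standard radial computation: in polar coordinates it equals $\omega_{d-1}\int_0^\infty r^{d-1} e^{-2\alpha r}\,dr = \omega_{d-1}\,\Gamma(d)/(2\alpha)^d$, where $\omega_{d-1} = 2\pi^{d/2}/\Gamma(d/2)$ is the surface area of the unit sphere. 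Hence
$$
\|Lg\|_\infty^2 \le \frac{2\pi^{d/2}}{\Gamma(d/2)}\cdot\frac{\Gamma(d)}{(2\alpha)^d},
$$
and taking square roots gives exactly $\|L\|_{\infty\to\infty} \le \sqrt{2}\,\pi^{d/4}(2\alpha)^{-d/2}\sqrt{\Gamma(d)/\Gamma(d/2)}$, which is \eqref{Vogt2.5-2}.

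The only genuinely delicate point is the first displayed inequality chain in the previous paragraph — verifying that \eqref{Vogt2.5-1} can legitimately be applied to $f = e^{-\alpha\rho_w}g$, i.e. that this $f$ lies in $L^2(D)$ (it does, since $g\in L^\infty\subset L^2_{\mathrm{loc}}$ and the weight decays, but if $D$ is unbounded one should note $g\in L^\infty(D)$ already forces $e^{-\alpha\rho_w}g\in L^2$ via the same Gaussian bound) and that the identity $L(e^{\alpha\rho_w} f) = Lg$ is meaningful. I would also take care to phrase the passage from "value at $w$" to "$L^\infty$ norm" correctly: strictly, $\|Lg\|_\infty = \operatorname*{ess\,sup}_w |Lg(w)|$, and the bound $|Lg(w)|\le \|e^{-\alpha\rho_w}g\|_2 \le (\text{const})$ holds for a.e. $w$ with a constant independent of $w$, so the essential supremum obeys the same bound — no limiting argument over near-maximizers is actually needed. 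Everything else is the routine polar-coordinates evaluation of $\int_{\R^d}e^{-2\alpha|y|}\,dy$.
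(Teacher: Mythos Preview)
Your proposal is correct and follows essentially the same approach as the paper: both use the pointwise identity $|Lg(w)| \le \|e^{-\alpha\rho_w}Lg\|_\infty$, apply the hypothesis with input $e^{-\alpha\rho_w}g$ to get $\|e^{-\alpha\rho_w}Lg\|_\infty \le \|e^{-\alpha\rho_w}g\|_2$, bound this by $\|e^{-\alpha\rho_w}\|_2\|g\|_\infty$, and then compute $\|e^{-\alpha\rho_w}\|_2^2 = \int_D e^{-2\alpha|x-w|}\,dx \le \omega_{d-1}\Gamma(d)/(2\alpha)^d$ in polar coordinates. The paper phrases the first step as the identity $\|Lf\|_\infty = \sup_{w\in D}\|e^{-\alpha\rho_w}Lf\|_\infty$, but this is exactly your pointwise argument; your extra remarks on the essential supremum and on $e^{-\alpha\rho_w}g\in L^2(D)$ are careful details the paper leaves implicit.
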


\begin{proof}
The proof is essentially the same as  in \cite[Proposition 2.5]{Vogt-2019a}. Note that $\|Lf\|_\infty=\sup_{w\in D}\|e^{-\alpha\rho_w}Lf\|_\infty$. Then we have
\[
\|e^{-\alpha\rho_w}Lf\|_\infty\le \|e^{-\alpha\rho_w}f\|_2\le \|e^{-\alpha\rho_w}\|_2\|f\|_\infty.
\]
Let $\sigma_{d-1}$ denote the
surface measure of the unit sphere,  
then the conclusion follows from the estimate below.
\begin{align*}
\left\Vert e^{-\alpha\rho_{w}}\right\Vert _{2}^{2} & =\int e^{-2\alpha\left|w-y\right|}\chi_{D}(y)dy
  \leq\int e^{-2\alpha\left|y\right|}dy\\
 & =\sigma_{d-1}\int_{0}^{\infty}e^{-2\alpha r}r^{d-1}dr
 =\frac{2\pi^{d/2}}{\Gamma\left(d/2\right)}\frac{\Gamma\left(d\right)}{\left(2\alpha\right)^{d}}.
\end{align*}
\end{proof}
 
\begin{prop}
\label{VogThm2.1-Improvement}For all $\epsilon\in\left(0,1\right]$, we have
\[
\left\Vert e^{-t\left(-\Delta_D\right)}\right\Vert _{\infty\to\infty}\leq e^{d/4}\frac{\sqrt{2}}{\left(8d\right)^{d/4}}\sqrt{\frac{\Gamma\left(d\right)}{\Gamma\left(d/2\right)}}\left(1+\frac{1}{\sqrt{\epsilon}}\right)^{d/2}e^{-\left(1-\epsilon\right)\lambda_{1}t},
\]
for $t\geq0$.  In particular, for all $x\in D$ and $t\geq 0$, 
\begin{equation}\label{IIIestiamte}
\P_x(\tau_D>t)\leq e^{d/4}\frac{\sqrt{2}}{\left(8d\right)^{d/4}}\sqrt{\frac{\Gamma\left(d\right)}{\Gamma\left(d/2\right)}}\left(1+\frac{1}{\sqrt{\epsilon}}\right)^{d/2}e^{-\left(1-\epsilon\right)\frac{\lambda_1t}{2}}.
\end{equation}

\end{prop}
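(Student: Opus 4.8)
The corollary \eqref{IIIestiamte} is immediate once the main estimate is in hand: writing $H:=-\Delta_D$, the operator $e^{-tH}$ is positivity preserving with $e^{-tH}\mathbf{1}\le\mathbf{1}$, so $\|e^{-tH}\|_{\infty\to\infty}=\|e^{-tH}\mathbf{1}\|_\infty=\sup_{w\in D}\P_w(\tau_D>t)\le 1$, and $\P_x(\tau_D>t)=\int_D K_D(x,y,t/2)\,dy=(e^{-(t/2)H}\mathbf{1})(x)\le\|e^{-(t/2)H}\|_{\infty\to\infty}$; thus \eqref{IIIestiamte} follows from the $L^\infty\to L^\infty$ bound applied at time $t/2$. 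For the main bound the plan is to follow the proof of Vogt's Theorem~2.1 in \cite{Vogt-2019a}, substituting Lemma~\ref{Vogt-2.5-Improvement} for his weaker Proposition~2.5 and tracking constants. One extra observation streamlines the $\lambda_1$-dependence: since $K_{rD}(x,y,\tau)=r^{-d}K_D(x/r,y/r,\tau/r^2)$ and $\lambda_1(rD)=r^{-2}\lambda_1(D)$, the map $\tau\mapsto\|e^{-\tau H_D}\|_{\infty\to\infty}\,e^{(1-\epsilon)\lambda_1(D)\tau}$ is unchanged (up to the reparametrization $\tau\mapsto\tau/r^2$) under $D\mapsto rD$, so we may normalize $\lambda_1(D)$ to any convenient value depending only on $d$ and $\epsilon$.

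Two weighted estimates carry the argument, both as in Vogt. \emph{(i) $L^2\to L^2$ decay:} for $\alpha>0$ and $w\in\R^d$, $\|e^{-\alpha\rho_w}e^{-sH}e^{\alpha\rho_w}\|_{2\to 2}\le e^{-(\lambda_1-\alpha^2)s}$ for $s\ge 0$. This is the Davies perturbation trick: for a bounded Lipschitz $\psi$ with $\|\nabla\psi\|_\infty\le\alpha$, the generator $A=e^{-\psi}He^{\psi}$ of $e^{-\psi}e^{-sH}e^{\psi}$ satisfies, after integration by parts, $\mathrm{Re}\langle Au,u\rangle=\int_D|\nabla u|^2-\int_D|\nabla\psi|^2|u|^2\ge(\lambda_1-\alpha^2)\|u\|_2^2$ for $u\in H^1_0(D)$ (using $|\nabla\psi|\le\alpha$ and the variational bound $\int_D|\nabla u|^2\ge\lambda_1\|u\|_2^2$), whence $\|e^{-sA}\|_{2\to 2}\le e^{-(\lambda_1-\alpha^2)s}$; one applies this with $\psi=\alpha\min(\rho_w,n)$ and lets $n\to\infty$. \emph{(ii) $L^2\to L^\infty$ bound:} from $0\le K_D(x,y,s)\le(4\pi s)^{-d/2}e^{-|x-y|^2/(4s)}$ together with $e^{-\alpha|x-w|+\alpha|y-w|}\le e^{\alpha|x-y|}$ one gets, uniformly in $w$,
\[
\|e^{-\alpha\rho_w}e^{-sH}e^{\alpha\rho_w}\|_{2\to\infty}\ \le\ (4\pi s)^{-d/2}\Big(\int_{\R^d}e^{-|z|^2/(2s)+2\alpha|z|}\,dz\Big)^{1/2}\ =:\ \Lambda(s,\alpha,d),
\]
and $\Lambda(s,\alpha,d)$ is bounded explicitly by completing the square in the Gaussian integral.

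Now assemble. For $t$ below a threshold $t_0(d,\epsilon)$, use the trivial bound $\|e^{-tH}\|_{\infty\to\infty}\le 1$; the normalization of $\lambda_1$ makes this enough there. For $t\ge t_0$, write $e^{-tH}=e^{-sH}e^{-(t-s)H}$ with $s<t$ chosen in the range where $\Lambda(s,\alpha,d)\le 1$, and apply Lemma~\ref{Vogt-2.5-Improvement} to $L:=e^{(\lambda_1-\alpha^2)(t-s)}e^{-tH}$. By (i) and (ii),
\begin{align*}
\|e^{-\alpha\rho_w}Le^{\alpha\rho_w}\|_{2\to\infty}
&\le e^{(\lambda_1-\alpha^2)(t-s)}\,\|e^{-\alpha\rho_w}e^{-sH}e^{\alpha\rho_w}\|_{2\to\infty}\,\|e^{-\alpha\rho_w}e^{-(t-s)H}e^{\alpha\rho_w}\|_{2\to 2}\\
&\le \Lambda(s,\alpha,d)\ \le\ 1,
\end{align*}
so Lemma~\ref{Vogt-2.5-Improvement} gives $\|L\|_{\infty\to\infty}\le\frac{\sqrt{2}\,\pi^{d/4}}{(2\alpha)^{d/2}}\sqrt{\Gamma(d)/\Gamma(d/2)}$; since $L=e^{(\lambda_1-\alpha^2)(t-s)}e^{-tH}$, this reads
\[
\|e^{-tH}\|_{\infty\to\infty}\ \le\ \frac{\sqrt{2}\,\pi^{d/4}}{(2\alpha)^{d/2}}\sqrt{\frac{\Gamma(d)}{\Gamma(d/2)}}\;e^{-(\lambda_1-\alpha^2)(t-s)}.
\]
Requiring $\alpha^2\le\epsilon\lambda_1$ makes the decay rate at least $(1-\epsilon)\lambda_1$, and optimizing the pair $(\alpha,s)$ together with the scale normalization collapses the prefactor to $e^{d/4}\,\frac{\sqrt{2}}{(8d)^{d/4}}\sqrt{\Gamma(d)/\Gamma(d/2)}\,(1+1/\sqrt{\epsilon})^{d/2}$; the $\epsilon$-algebra is cleaned up by the identity $\epsilon(1+1/\sqrt{\epsilon})^2=(1+\sqrt{\epsilon})^2$.

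The main obstacle is this last optimization — pinning down $\alpha$, $s$, the threshold $t_0$, and the normalization of $\lambda_1$ so that the constants conspire to exactly the stated value, which forces one to evaluate $\int_{\R^d}e^{-|z|^2/(2s)+2\alpha|z|}\,dz$ explicitly and to check that the chosen $t_0$-window is compatible with the trivial bound there. The only remaining technical point is routine: running the Davies computation in (i) with the unbounded weight $\rho_w$ via the truncation $\psi=\alpha\min(\rho_w,n)$ and verifying that the $L^2\to L^2$ bound survives $n\to\infty$.
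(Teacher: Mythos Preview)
Your proposal assembles the right ingredients --- Davies' weighted $L^2\to L^2$ bound, the Gaussian pointwise bound on $K_D$, and Lemma~\ref{Vogt-2.5-Improvement} --- but organizes them more awkwardly than the paper and leaves the decisive computation (which you flag as ``the main obstacle'') undone. The paper avoids your constraint $\Lambda(s,\alpha,d)\le 1$, your threshold $t_0$, and your scale normalization entirely by two simple moves. First, it works with the shifted operator $H=-\Delta_D-\lambda_1$ (bottom of spectrum $0$), for which Vogt's argument already packages Davies plus Gaussian into the single bound
\[
\bigl\|e^{-\alpha\rho_w}e^{-tH}e^{\alpha\rho_w}\bigr\|_{2\to\infty}\le(8\pi\epsilon t)^{-d/4}\Bigl(1+\tfrac{1}{\beta}\Bigr)^{d/4}e^{\lambda_1\epsilon t}\,e^{(1+\beta)\alpha^2\epsilon t+\alpha^2(1-\epsilon)t},
\]
where the extra parameter $\beta$ comes from the AM--GM linearization $2\alpha|z|\le|z|^2/(2\beta\epsilon t)+2\alpha^2\beta\epsilon t$ --- precisely the step you anticipated as hard in evaluating $\int_{\R^d}e^{-|z|^2/(2s)+2\alpha|z|}\,dz$, which does not close cleanly by completing the square in the radial variable. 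Second, the paper applies Lemma~\ref{Vogt-2.5-Improvement} by homogeneity (if the $2\to\infty$ weighted norm is $\le C$, then $\|L\|_{\infty\to\infty}\le C$ times the Lemma's constant), rather than forcing the bound $\le 1$; this removes the constraint and yields a bound on $\|e^{-tH}\|_{\infty\to\infty}$ depending only on $\alpha,\beta,\epsilon,t$. Then $\alpha^2=\tfrac{d/4}{(1+\beta\epsilon)t}$ eliminates $t$ from the prefactor and produces the $e^{d/4}$, and $\beta=\epsilon^{-1/2}$ gives $(1+\beta\epsilon)(1+1/\beta)=(1+\sqrt\epsilon)^2$, which combined with $(8\epsilon d)^{-d/4}$ yields exactly $(8d)^{-d/4}(1+1/\sqrt\epsilon)^{d/2}$. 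No case split in $t$ is needed.

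Your splitting $t=s+(t-s)$ is a valid rephrasing of Vogt's $t=\epsilon t+(1-\epsilon)t$, and your bound (i) is correct; but once you introduce the AM--GM parameter to handle $\Lambda$, you are back in the paper's parameterization with $s=\epsilon t$, and the constraint, threshold, and normalization become superfluous. In short: the route is sound, but the paper's organization turns your ``main obstacle'' into a two-variable optimization with an explicit closed-form solution.
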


\begin{proof}
The proof is similar to Theorem 2.1 in \cite{Vogt-2019a}. Here we only sketch the key steps. Consider the operator $H=-\Delta_D-\lambda_1$, it 
 is a self-adjoint operator in $L^{2}\left(D\right)$ with the bottom of spectrum $\lambda_1(H)=0$. Clearly the heat kernel of $e^{-tH}$ has the Gaussian upper bound
 \[
\left|K_{t}(x,y)\right|\leq e^{\lambda_1 t}\cdot\frac{1}{\left(4\pi t\right)^{d/4}}\exp\left(-\frac{\left|x-y\right|^{2}}{4t}\right)
 \]
 for all $t>0$ and a.e. $x,y\in D$. 
It then holds that (see proof of Theorem 2.1, page 43 in \cite{Vogt-2019a})
for any $\epsilon\in\left(0,1\right]$
\begin{align}
\left\Vert e^{-\alpha\rho_{w}}e^{-tH}e^{\alpha\rho_{w}}\right\Vert _{2\to\infty} & \leq\left(8\pi\epsilon t\right)^{-d/4}\left(1+\frac{1}{\beta}\right)^{d/4}e^{\lambda_1\epsilon t}e^{\left(1+\beta\right)\alpha^{2}\epsilon t+\alpha^{2}\left(1-\epsilon\right)t}.\label{eq:VogtB-Bound1}
\end{align}
Applying Lemma \ref{Vogt-2.5-Improvement}
to $L=e^{-tH}$ and using $(\ref{eq:VogtB-Bound1})$ we have that 

\begin{align*}
\left\Vert e^{-tH}\right\Vert _{\infty\to\infty} & \leq\frac{\sqrt{2}\pi^{d/4}}{\left(2\alpha\right)^{d/2}}\sqrt{\frac{\Gamma\left(d\right)}{\Gamma\left(d/2\right)}}\cdot\left(8\pi\epsilon t\right)^{-d/4}\left(1+\frac{1}{\beta}\right)^{d/4}e^{\lambda_1\epsilon t}e^{\left(1+\beta\epsilon\right)\alpha^{2}t},
\end{align*}
taking $\alpha^{2}=\frac{d/4}{\left(1+\beta\epsilon\right)t}$ we
obtain 
\begin{align*}
\left\Vert e^{-tH}\right\Vert _{\infty\to\infty} 
 & \le\sqrt{2}\pi^{d/4}\sqrt{\frac{\Gamma\left(d\right)}{\Gamma\left(d/2\right)}}\left(8\pi\epsilon t\right)^{-d/4}\left(\frac{\left(1+\beta\epsilon\right)t}{d}\left(1+\frac{1}{\beta}\right)\right)^{d/4}e^{\lambda_1\epsilon t+d/4}.
\end{align*}
Optimizing the right hand side of the above inequality by taking $\beta=\epsilon^{-1/2}$ we have
\begin{align*}
\left\Vert e^{-tH}\right\Vert _{\infty\to\infty} & \leq
 e^{d/4}\frac{\sqrt{2}}{\left(8d\right)^{d/4}}\sqrt{\frac{\Gamma\left(d\right)}{\Gamma\left(d/2\right)}}\cdot\left(1+\frac{1}{\sqrt{\epsilon}}\right)^{d/2}e^{\lambda_1\epsilon t}.
\end{align*}
This then completes the proof.
\end{proof}
\begin{proof}[Proof of Lemma \ref{lemma-EstimateIII}]
By \eqref{IIIestiamte} we have 
\begin{align}\label{eq:FinalIII}
I & =\int_{a/\lambda_{1}}^{\infty}t^{p-1}\mathbb{P}_{x}\left(\tau_{D}>t\right)dt\nonumber \\
&\leq e^{d/4}\frac{\sqrt{2}}{\left(8d\right)^{d/4}}\sqrt{\frac{\Gamma\left(d\right)}{\Gamma\left(d/2\right)}}\left(1+\frac{1}{\sqrt{\epsilon}}\right)^{d/2}\int_{a/\lambda_{1}}^{\infty}t^{p-1}e^{-\left(1-\epsilon\right)
\frac{\lambda_1t}{2}dt}\\
 & =2^{p}\frac{e^{d/4}\sqrt{2}}{\left(8d\right)^{d/4}}\sqrt{\frac{\Gamma\left(d\right)}{\Gamma\left(d/2\right)}}\left(1+\frac{1}{\sqrt{\epsilon}}\right)^{d/2}\frac{\Gamma\left(p,\left(1-\epsilon\right)a/2\right)}{\left(1-\epsilon\right)^{p}\lambda_{1}^{p}}.\nonumber
\end{align}
\end{proof}

\begin{remark}
For the interested reader, we remark the following numerical estimate as a  consequence of Theorem \ref{thm-main-thm}.  
In particular, when $p=1$ one can obtain the following  bound
\[
M_{1, 2}(\X)\leq 2C_{1}(2,1)\leq 2f_{1,2}\left(1.65659,0.173247\right)\leq 2\cdot(2.03785)= 4.0757.
\]
where 
\begin{align*}
f_{p,d}\left(a,\epsilon\right):= & \frac{a^{p}}{2^{p}\Gamma\left(p+1\right)} +\frac{1}{\Gamma\left(p\right)}\frac{e^{d/4}\sqrt{2}}{\left(8d\right)^{d/4}}\sqrt{\frac{\Gamma\left(d\right)}{\Gamma\left(d/2\right)}}\left(1+\frac{1}{\sqrt{\epsilon}}\right)^{d/2}\frac{\Gamma\left(p,\left(1-\epsilon\right)a/2\right)}{\left(1-\epsilon\right)^{p}}.
\end{align*}
\end{remark}


Moreover, we have the following corollary.

\begin{corollary}
\label{cor:Vogt-u1-Sharp} 
We have
\begin{equation}
M_{1, d}(\X)\leq\frac{d}{2}\frac{1}{y_{d}\left(1+\sqrt{y_{d}}\right)}=:2C_3(d,1).\label{eq:VOGTImprovement2} 
\end{equation}
Here,  $y=y_{d}\in(0,1)$ is the unique solution to 
\begin{equation}\label{eq:g3}
-d+d\sqrt{y}+\left(4+4A_{d}\right)y+(2d)\, y\log\left({\left(1+1/\sqrt{y}\right)/2}\right)=0,\quad y\in\left(0,1\right), 
\end{equation}
where $$A_{d}=\log\left[\frac{2^{d/2}e^{d/4}\sqrt{2}}{\left(8d\right)^{d/4}}\sqrt{\frac{\Gamma\left(d\right)}{\Gamma\left(d/2\right)}}\right],$$
and  $$\lim_{d\to\infty}y_{d}=1.$$
\end{corollary}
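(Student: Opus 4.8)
The plan is to make the bound of Theorem~\ref{thm-main-thm} completely explicit at $p=1$ by carrying out the two-variable optimization defining $C_1(d,p)$ in \eqref{eq:C2}. Since $\Gamma(1,x)=e^{-x}$ and $\Gamma(1)=\Gamma(2)=1$, inequality~\eqref{eq:VOGTImpovement1} becomes $M_{1,d}(\X)\le 2\,C_1(d,1)$ with
\[
C_1(d,1)=\inf_{a>0,\,0<\epsilon<1}\left\{\frac a2+B_d\Bigl(1+\tfrac1{\sqrt\epsilon}\Bigr)^{d/2}\frac{e^{-(1-\epsilon)a/2}}{1-\epsilon}\right\},\qquad B_d:=\frac{e^{d/4}\sqrt2}{(8d)^{d/4}}\sqrt{\frac{\Gamma(d)}{\Gamma(d/2)}}.
\]
First I would optimize in $a$ for fixed $\epsilon$. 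The bracketed expression is strictly convex in $a$; moreover $B_d(1+1/\sqrt\epsilon)^{d/2}>2^{d/2}B_d=e^{A_d}\ge 1$ on $(0,1)$, the inequality $A_d\ge 0$ being an elementary Stirling (or Legendre-duplication) estimate valid for all $d\ge 2$. Hence the minimizer in $a$ is interior, $a^\ast(\epsilon)=\tfrac{2}{1-\epsilon}\log\!\bigl(B_d(1+1/\sqrt\epsilon)^{d/2}\bigr)$, and substituting, together with $A_d=\tfrac d2\log 2+\log B_d$, gives
\[
C_1(d,1)=\inf_{0<\epsilon<1}g(\epsilon),\qquad g(\epsilon)=\frac{N(\epsilon)}{1-\epsilon},\qquad N(\epsilon):=1+A_d+\frac d2\log\!\Bigl(\frac{1+1/\sqrt\epsilon}{2}\Bigr).
\]

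Next I would minimize $g$ on $(0,1)$. A short computation gives $N'(\epsilon)=-\dfrac{d}{4\epsilon(1+\sqrt\epsilon)}<0$, so $g'(\epsilon)=(1-\epsilon)^{-2}h(\epsilon)$ with $h(\epsilon):=N(\epsilon)+(1-\epsilon)N'(\epsilon)$; since $h'(\epsilon)=(1-\epsilon)N''(\epsilon)$ and $N''>0$ (as $\epsilon\mapsto-1/(\epsilon(1+\sqrt\epsilon))$ is increasing), $h$ is strictly increasing, with $h(\epsilon)\to-\infty$ as $\epsilon\to0^+$ (the term $(1-\epsilon)N'(\epsilon)\sim-d/(4\epsilon)$ dominates) and $h(\epsilon)\to N(1)=1+A_d>0$ as $\epsilon\to1^-$. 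Thus $h$ has a unique zero $y_d\in(0,1)$, which — as $g\to+\infty$ at both endpoints — is the unique minimizer. The useful observation is that at a critical point of $g=N/(1-\epsilon)$ one has $N(y_d)=-(1-y_d)N'(y_d)$, whence
\[
C_1(d,1)=g(y_d)=\frac{N(y_d)}{1-y_d}=-N'(y_d)=\frac{d}{4\,y_d(1+\sqrt{y_d})},
\]
and therefore $M_{1,d}(\X)\le 2C_1(d,1)=\tfrac d2\,\tfrac1{y_d(1+\sqrt{y_d})}$. Writing out $h(y_d)=0$, multiplying by $4y_d$, and using $(1-y_d)/(1+\sqrt{y_d})=1-\sqrt{y_d}$ reproduces exactly equation~\eqref{eq:g3}.

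It remains to show $y_d\to1$. Dividing \eqref{eq:g3} by $d$ gives $-1+\sqrt{y_d}+\tfrac{4(1+A_d)}{d}y_d+2y_d\log\!\bigl(\tfrac{1+1/\sqrt{y_d}}{2}\bigr)=0$, and since $A_d$ stays bounded (in fact $A_d\to\tfrac14\log2$ by Stirling) the third term is $o(1)$. If some subsequence of $y_d$ converged to $c\in[0,1)$, taking limits would force $\psi(c)=0$, where $\psi(y):=-1+\sqrt y+2y\log\!\bigl(\tfrac{1+1/\sqrt y}{2}\bigr)$ (with $\psi(0^+)=-1$); but $\psi(1)=0$, $\psi(0^+)=-1$, and — after the substitution $t=\sqrt y$ — $\psi'$ is strictly positive on $(0,1)$, so $\psi<0$ there, a contradiction. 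Hence $y_d\to1$.

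I expect the only mildly delicate points to be: (i) checking $A_d\ge0$, i.e. $2^{d/2}B_d\ge1$ for every $d\ge2$, which guarantees the $a$-minimum is interior and needs a one-line Stirling estimate; and (ii) the elementary monotonicity verifications for $h$ on $(0,1)$ and for $\psi$ on $(0,1)$ — note $\psi'(1)=0$, so a purely local argument at $y=1$ does not suffice and one really needs $\psi$ increasing on the whole interval. Neither is a genuine obstacle: the content of the corollary is simply the explicit minimization of the bound of Theorem~\ref{thm-main-thm} at $p=1$.
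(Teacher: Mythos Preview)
Your proposal is correct and follows essentially the same route as the paper: specialize Theorem~\ref{thm-main-thm} to $p=1$, minimize first in $a$ and then in $\epsilon$, and evaluate $g$ at its unique critical point using the relation $N(y_d)=-(1-y_d)N'(y_d)$ to obtain the closed form $\tfrac{d}{4y_d(1+\sqrt{y_d})}$. Your write-up is in fact slightly more careful than the paper's in two places---you justify that the $a$-minimizer is interior via $A_d\ge 0$, and you give a genuine subsequence argument for $y_d\to 1$ (the paper defers this to a brief remark)---while the paper's uniqueness argument via monotonicity of $F_d$ is equivalent to yours via monotonicity of $h$.
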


\begin{proof}
From \eqref{eq:C2} we have
\[
C_{1}(d,1):=\inf_{x>0,0<y<1}f(x,y)
\]
where $f:\mathbb{R}_{+}\times\left[0,1\right]\to\mathbb{R}_{+}$ is
defined by
\[
f(x,y)=\frac{x}{2}+\frac{e^{d/4}\sqrt{2}}{\left(8d\right)^{d/4}}\sqrt{\frac{\Gamma\left(d\right)}{\Gamma\left(d/2\right)}}\left(1+\frac{1}{\sqrt{y}}\right)^{d/2}\frac{1}{\left(1-y\right)}e^{-\left(1-y\right)x/2}.
\]
Note that
\[
f_{x}\left(x,y\right)=\frac{1}{2}-\frac{1}{2}\frac{e^{d/4}\sqrt{2}}{\left(8d\right)^{d/4}}\sqrt{\frac{\Gamma\left(d\right)}{\Gamma\left(d/2\right)}}\left(1+\frac{1}{\sqrt{y}}\right)^{d/2}e^{-\left(1-y\right)x/2},
\]
we then obtain the minimizer of $f(\cdot, y)$ at
\[
x_{y}:=\frac{2}{\left(1-y\right)}\log\left[\frac{e^{d/4}\sqrt{2}}{\left(8d\right)^{d/4}}\sqrt{\frac{\Gamma\left(d\right)}{\Gamma\left(d/2\right)}}\left(1+\frac{1}{\sqrt{y}}\right)^{d/2}\right].
\]
We are then led to minimize the
one variable function  
\begin{align*}
 g(y)  &:=f\left(x_{y},y\right)
  =\frac{1}{1-y}\log\left[\frac{e^{d/4}\sqrt{2}}{\left(8d\right)^{d/4}}\sqrt{\frac{\Gamma\left(d\right)}{\Gamma\left(d/2\right)}}\left(1+\frac{1}{\sqrt{y}}\right)^{d/2}\right]+\frac{1}{\left(1-y\right)}\\
 &= \frac{\frac{d}{2}\log\left(\frac{1+1/\sqrt{y}}2\right)+1+A_{d}}{1-y}
\end{align*}
where
\[
A_{d}=\log\left[\frac{{2^{d/2}}e^{d/4}\sqrt{2}}{\left(8d\right)^{d/4}}\sqrt{\frac{\Gamma\left(d\right)}{\Gamma\left(d/2\right)}}\right].
\]
Since 
\begin{align}
g'(y) 
 & =\frac{-d \left(1-\sqrt{y}\right)+\left((2d)\log\left(\frac{1+1/\sqrt{y}}2\right)+4+4A_{d}\right)y}{4\left(1-y\right)^{2}y},\label{eq:g1}
\end{align}
if we assume that $y_{d}$ is a solution to $g'(y)=0$, 
then
\begin{equation}
\left(1+A_{d}\right)=\frac{d}{4y_d}\left(1-\sqrt{y_d}\right)-\frac{d}{2}\log\left(\frac{1+1/\sqrt{y_d}}2\right).\label{eq:g2}
\end{equation}
Plugging \eqref{eq:g2} back in \eqref{eq:g1} we have that
\begin{align*}
g\left(y_{d}\right) =\frac{d}{4}\frac{1}{y_{d}\left(1+\sqrt{y_{d}}\right)},
\end{align*}
hence we obtain \eqref{eq:VOGTImprovement2}.
Next we show that \eqref{eq:g3} has a  unique solution. Let $F_{d}:\left(0,1\right)\to\mathbb{R}$ be 
\[
F_{d}(y)=-\frac{d}{4}+\frac{d\sqrt{y}}4+y\left(1+A_{d}\right)+\frac{d}{2}\, y\log\left(\frac{1+1/\sqrt{y}}2\right).
\]
We easily find that $\lim_{y\to0}F_{d}(y)=-\frac{d}{4}<0$, $F_{d}(1)=1+A_{d}>0$ and $F_{d}^{\prime}(y)>0$. Therefore the conclusion follows.
\end{proof}

\begin{remark}
From the above corollary we can deduce that $\lim_{d\to\infty}y_{d}=1$. First it can be easily shown that $y_d$ exists (for instance see (3.3) in \cite{Vogt-2019a}). 
From \eqref{eq:g2} we have
\[
\frac{\left(4+4A_{d}\right)y_{d}}{d}=1-\sqrt{y_{d}}-2y_{d}\log\left(\frac{1+1/\sqrt{y_{d}}}{2}\right).
\]
Taking $d\to\infty$ on both sides we then obtain $y_\infty=1$. This limit coincides with the conclusion in \cite{Vogt-2019a}, but the corollary is sharper comparing to \cite{Vogt-2019a} by providing an almost explicit expression for $y_d$.
\end{remark}



\section{Sharp asymptotics for $M_{p, d}(\X)$: proof \eqref{asymtotic-bound} of Theorem \ref{asymtotic}} \label{Sec:4}

This section concerns the asymptotic estimates for $M_{p, d}(\X)$  in high dimensions.  First, we give an upper bound estimate of $\Mpd$ by analyzing  the variational problem in Theorem \ref{thm-main-thm}, which provides the correct leading order in $d$ for all $p\ge1$.

\begin{thm}
For $p>0$, 
\label{thm:SharpUpper}
\[
M_{p, d}(\X)\leq2^{p}\left(\frac{d}{8}+c\sqrt{d}+1-\frac{1}{1-y_{d}}\right)^{p}C_{2}\left(d,p\right)
\]
where 
\[
C_{2}(d,p):=1+p\int_{1}^{\infty}u^{p-1}e^{\left(1-u\right)\left[\left(1-y_{d}\right)\left(\frac{d}{8}+c\sqrt{d}+1\right)-1\right]}du,
\]
and
\begin{equation}\label{claim}
 \,\,\,\,c=\frac{1}{4}\sqrt{5\left(1+\frac{1}{4}\log2\right)}, \,\,\,\,\, \text{and}\,\,\,\, y_{d}=\frac{1}{\left(1+\frac{16c}{5\sqrt{d}}\right)^{2}}. 
\end{equation} 
\end{thm}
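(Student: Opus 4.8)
The plan is to specialize the variational bound of Theorem~\ref{thm-main-thm} by making a concrete (non-optimal but asymptotically efficient) choice of the parameters $a$ and $\epsilon$ in the infimum defining $C_1(d,p)$. Recall from \eqref{eq:VOGTImpovement1}--\eqref{eq:C2} that
\[
M_{p,d}(\X)\le 2^p\Gamma(p+1)\,C_1(d,p)
=2^p\Gamma(p+1)\inf_{a>0,\,0<\epsilon<1}\left\{\frac{a^p}{2^p\Gamma(p+1)}+\frac{1}{\Gamma(p)}\,\Lambda_d\,(1+\tfrac1{\sqrt\epsilon})^{d/2}\,\frac{\Gamma\!\left(p,(1-\epsilon)a/2\right)}{(1-\epsilon)^p}\right\},
\]
where $\Lambda_d=\dfrac{e^{d/4}\sqrt2}{(8d)^{d/4}}\sqrt{\dfrac{\Gamma(d)}{\Gamma(d/2)}}$. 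The idea is to choose $\epsilon=1-y_d$ with $y_d$ as in \eqref{claim}, so that $(1+1/\sqrt{y_d})^{-d/2}$ precisely cancels the factor $\Lambda_d\cdot 2^{d/2}$ up to the exponential weight — this is exactly the balancing that appears in Corollary~\ref{cor:Vogt-u1-Sharp}, and the choice of $c$ in \eqref{claim} is dictated by the Vogt bound \eqref{VogtBound}. Concretely, one substitutes $a=\lambda_1 t$-type scaling: set $a/2=(1-y_d)^{-1}\cdot u$ in the incomplete-gamma term and pick the lower limit of the remaining integral to be $1$, i.e. choose $a$ so that $(1-\epsilon)a/2$ equals the quantity $(1-y_d)(\tfrac d8+c\sqrt d+1)-1$ that appears in $C_2(d,p)$.

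The key steps, in order: (1) Fix $\epsilon:=1-y_d$ and, guided by the $p=1$ optimization in Corollary~\ref{cor:Vogt-u1-Sharp}, choose $a$ to be (essentially) the minimizer $x_y$ there, namely $a=\tfrac{2}{1-y_d}\log\!\big[\Lambda_d(1+1/\sqrt{y_d})^{d/2}\big]$, and verify using the explicit $\Lambda_d$ and Stirling's formula for $\Gamma(d)/\Gamma(d/2)$ that this makes $a/2=\tfrac d8+c\sqrt d+1+o(\sqrt d)$; more precisely that $a^p/2^p=\big(\tfrac d8+c\sqrt d+1-\tfrac1{1-y_d}\big)^p$ after absorbing the $\tfrac1{1-y_d}$ shift. (2) In the second term, write $\Gamma(p,(1-\epsilon)a/2)=\int_{(1-\epsilon)a/2}^\infty s^{p-1}e^{-s}ds$ and change variables $s=\big[(1-y_d)(\tfrac d8+c\sqrt d+1)-1\big]u$ to produce the integral $\int_1^\infty u^{p-1}e^{(1-u)[\cdots]}du$ appearing in $C_2(d,p)$; simultaneously check that the prefactor $\tfrac1{\Gamma(p)}\Lambda_d(1+1/\sqrt\epsilon)^{d/2}(1-\epsilon)^{-p}$ combines with the ``$1$'' from step~(1)'s $e^{-(1-\epsilon)a/2}$-type term to give exactly $C_2(d,p)\cdot(\text{the bracketed power})^p$. (3) Collect: the bound becomes $2^p\big(\tfrac d8+c\sqrt d+1-\tfrac1{1-y_d}\big)^p C_2(d,p)$, as claimed. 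Each of these is a bookkeeping computation with the Gamma function and the explicit $y_d$.

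The main obstacle I anticipate is \textbf{step (1)}: one must show that the particular algebraic choice $y_d=1/(1+16c/(5\sqrt d))^2$ is consistent with — indeed essentially equals — the minimizer coming out of the $f_x=0$ equation, and that after plugging $\Lambda_d$ and the asymptotics $\log\sqrt{\Gamma(d)/\Gamma(d/2)}=\tfrac d4\log 2-\tfrac d4+O(\log d)$ (from Stirling) into $\log[\Lambda_d(1+1/\sqrt{y_d})^{d/2}]$, the leading and $\sqrt d$ terms collapse to exactly $\tfrac d8+c\sqrt d+1$. This requires carefully tracking the $O(\sqrt d)$ contribution produced by $(1+1/\sqrt{y_d})^{d/2}$: since $1/\sqrt{y_d}=1+16c/(5\sqrt d)$, we have $\tfrac d2\log\tfrac{1+1/\sqrt{y_d}}2=\tfrac d2\log(1+\tfrac{8c}{5\sqrt d})=\tfrac{4c\sqrt d}{5}+O(1)$, and one must see that this, together with the $\tfrac d4$ from $\Lambda_d$'s $e^{d/4}$ and the $-\tfrac d4+\tfrac d4\log 2$ from Stirling and the $-\tfrac d4\log 2$ hidden in $(8d)^{-d/4}$, sums to the stated $\tfrac d8$ with coefficient of $\sqrt d$ equal to $c=\tfrac14\sqrt{5(1+\tfrac14\log 2)}$ — the precise value of $c$ being exactly what forces the $\sqrt d$-coefficient to match. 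Everything downstream (steps (2)--(3)) is then a routine substitution, and the asymptotic statement \eqref{asymtotic-bound} of Theorem~\ref{asymtotic} follows by letting $d\to\infty$ in this bound together with the lower bound from the cube, handled in Section~\ref{sec:LowerBound}.
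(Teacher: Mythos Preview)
Your overall strategy is the paper's strategy: plug a specific choice of $(a,\epsilon)$ into the variational bound of Theorem~\ref{thm-main-thm}, with $\epsilon=1-y_d$ and $a=x_{y_d}=\frac{2}{1-y_d}\log\bigl[\Lambda_d(1+1/\sqrt{y_d})^{d/2}\bigr]$, then simplify. So the skeleton is right.

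The gap is in how you handle step~(1). You propose to use Stirling's formula to \emph{compute} $\log\bigl[\Lambda_d(1+1/\sqrt{y_d})^{d/2}\bigr]$ and argue that it equals $\tfrac{d}{8}+c\sqrt d+1+o(\sqrt d)$, with $c$ chosen so the $\sqrt d$-coefficients match. That is an asymptotic statement; it cannot produce the exact inequality claimed in the theorem, which is meant to hold for every $d\ge2$. Moreover, you repeatedly speak of equalities (e.g.\ ``$a^p/2^p=\bigl(\tfrac d8+c\sqrt d+1-\tfrac1{1-y_d}\bigr)^p$'') where in fact only one-sided inequalities are available, and the directions must be tracked so that the result is still an \emph{upper} bound on $M_{p,d}(\X)$.

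What the paper actually does in place of your asymptotic Stirling step is a pair of exact inequalities. First, the clean bound $\Gamma(d)/\Gamma(d/2)\le 2^{d-1/2}(d/2e)^{d/2}$ gives $\Lambda_d\le 2^{1/4-d/2}$ exactly. Second, and this is the point you are missing, a separate calculus lemma shows that for the specific $y_d$ and $c$ in \eqref{claim},
\[
\log\!\Bigl[2^{1/4-d/2}\bigl(1+\tfrac{1}{\sqrt{y_d}}\bigr)^{d/2}\Bigr]+1\;\le\;(1-y_d)\Bigl(\tfrac d8+c\sqrt d+1\Bigr),
\]
valid for all $d$. This reduces (after the substitution $x=\gamma/\sqrt d$ with $\gamma=8c/5$) to an elementary one-variable inequality on $(0,1)$ that is checked by calculus, exactly as in Vogt's argument. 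Combining gives
\[
\log\!\Bigl[\Lambda_d\bigl(1+\tfrac{1}{\sqrt{y_d}}\bigr)^{d/2}\Bigr]\;\le\;(1-y_d)\Bigl(\tfrac d8+c\sqrt d+1\Bigr)-1,
\]
and it is \emph{this} inequality (not an asymptotic equality) that one substitutes into both the $x_{y_d}^p$ term and the incomplete-gamma term. Once you have it, your steps (2)--(3) go through: in the $\Gamma(p,\cdot)$ integral one changes variables by the factor $\log[\Lambda_d(1+1/\sqrt{y_d})^{d/2}]$ and then bounds using the displayed inequality, so that the exponential integrand is dominated by $e^{(1-u)[(1-y_d)(d/8+c\sqrt d+1)-1]}$, producing $C_2(d,p)$. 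In short, replace ``Stirling asymptotics'' by the two exact inequalities above, and be careful that every replacement is an upper bound.
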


The proof of Theorem \ref{thm:SharpUpper} will require the following elementary estimate. 
\begin{lemma} With $c$ and $y_d$ as in \eqref{claim} we have 
\begin{equation}\label{Log-Cd-Ineq}
\log\left[2^{\frac{1}{4}-\frac{d}{2}}\left(1+\frac{1}{\sqrt{y_{d}}}\right)^{d/2}\right]+1\leq\left(1-y_{d}\right)\left(\frac{d}{8}+c\sqrt{d}+1\right).
\end{equation}
\end{lemma}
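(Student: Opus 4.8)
The plan is to reduce \eqref{Log-Cd-Ineq} to a polynomial inequality in the single variable $s := \tfrac{16c}{5\sqrt d} = \tfrac{1}{\sqrt{y_d}} - 1 > 0$, exploiting the explicit form of $c$ in \eqref{claim}. Write $K := 1 + \tfrac14 \log 2$, so that $c^2 = \tfrac{5K}{16}$; then from $s\sqrt d = \tfrac{16c}{5}$ one reads off the identities $c\sqrt d = \tfrac{K}{s}$, $d = \tfrac{16K}{5s^2}$, and $1 - y_d = 1 - (1+s)^{-2} = \tfrac{s(2+s)}{(1+s)^2}$. Since $1 + \tfrac{1}{\sqrt{y_d}} = 2 + s$, the left-hand side of \eqref{Log-Cd-Ineq} equals
\[
\Big(\tfrac14 - \tfrac d2\Big)\log 2 + \tfrac d2\log(2+s) + 1 \;=\; K + \frac{8K}{5s^2}\,\log\!\Big(1 + \frac s2\Big),
\]
while the right-hand side equals $\tfrac{s(2+s)}{(1+s)^2}\big(\tfrac{2K}{5s^2} + \tfrac{K}{s} + 1\big) = \tfrac{(2+s)(5s^2 + 5Ks + 2K)}{5s(1+s)^2}$, so everything has been turned into a one-variable inequality.

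To control the logarithm I would use the elementary bound $\log(1+x) \le \tfrac{x(2+x)}{2(1+x)}$, valid for $x \ge 0$ because the difference vanishes at $x = 0$ and has derivative $\tfrac{x^2}{2(1+x)^2} \ge 0$. Taking $x = s/2$ gives $\log(1 + \tfrac s2) \le \tfrac{s(4+s)}{4(2+s)}$, so it suffices to prove
\[
K + \frac{2K(4+s)}{5s(2+s)} \;\le\; \frac{(2+s)(5s^2 + 5Ks + 2K)}{5s(1+s)^2}.
\]
Multiplying through by $5s(1+s)^2(2+s) > 0$, this becomes $K(1+s)^2(5s^2 + 12s + 8) \le (2+s)^2(5s^2 + 5Ks + 2K)$; expanding both sides, the coefficients of $s^0$ and $s^1$ cancel, and after dividing by $s^2$ one is left with the clean quadratic inequality
\[
5(1-K)s^2 + (20 - 17K)\,s + (20 - 15K) \;\ge\; 0.
\]

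To finish, I would observe that $K = 1 + \tfrac14\log 2 < \tfrac{20}{17}$, so $20 - 17K > 0$ and the middle term may simply be discarded; it then remains to prove $5(1-K)s^2 + 20 - 15K \ge 0$. Here the hypothesis $d \ge 2$ enters: $s^2 = \tfrac{256c^2}{25 d} \le \tfrac{256c^2}{50} = \tfrac{8K}{5}$, and since $1 - K < 0$ this yields $5(1-K)s^2 \ge 8K(1-K)$. Hence it is enough that $8K(1-K) + 20 - 15K \ge 0$, i.e.\ $8K^2 + 7K - 20 \le 0$; as $K \mapsto 8K^2 + 7K - 20$ is increasing on $(0,\infty)$ and $K = 1 + \tfrac14\log 2 < 1.175$ while $8(1.175)^2 + 7(1.175) - 20 < 0$, this holds, completing the argument.

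The delicate point — and the reason some care is needed — is that \eqref{Log-Cd-Ineq} is nearly an equality at $d = 2$ (both sides are about $1.7$), which is the extreme value of $s$. Thus the crude estimate $\log(1+x)\le x$ does not leave enough room, and one must use a second-order (Padé-type) refinement of the logarithm, as above, so that the resulting polynomial inequality survives on the whole relevant range $0 < s \le \tfrac{16c}{5\sqrt 2}$.
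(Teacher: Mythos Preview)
Your proof is correct and follows essentially the same route as the paper's: both introduce the single variable $s=1/\sqrt{y_d}-1$ (the paper uses $x=s/2$) and rewrite the two sides of \eqref{Log-Cd-Ineq} as explicit functions of this variable, reducing the lemma to a one-variable calculus inequality. The paper stops there and cites \cite[p.~46]{Vogt-2019a} for the remaining ``elementary calculus''; you instead carry the argument through, using the Pad\'e-type bound $\log(1+x)\le \tfrac{x(2+x)}{2(1+x)}$ to reduce to a quadratic in $s$ and then invoking $d\ge 2$ to bound $s^2\le 8K/5$. Your approach is thus more self-contained, and your closing remark correctly identifies why the naive bound $\log(1+x)\le x$ is insufficient near the endpoint $d=2$.
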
 

\begin{proof} First note
\[
\text{LHS}=\frac{1}{4}\log2+\frac{d}{2}\log\left(\frac{1+1/\sqrt{y_{d}}}{2}\right)+1.
\]
Denote by $\gamma:=\frac{8}{5}c$ and set $x=\frac{\gamma}{\sqrt{d}}$. We can easily check that $0<x<1$. Clearly
$1+2x=\frac{1}{\sqrt{y_{d}}}$, 
and hence
\begin{align*}
\text{LHS} 
 & =\frac{5}{4}\gamma^{2}+\frac{d}{2}\log\left(\frac{1+\left(1+2x\right)}{2}\right) =\frac{5}{4}x^{2}d+\frac{d}{2}\log\left(1+x\right).
\end{align*}
On the other hand
\begin{align*}
\text{RHS} 
 & =\left(1-\frac{1}{\left(1+2x\right)^{2}}\right)\frac{d}{8}\left(1+\frac{8c}{\sqrt{d}}+\frac{8}{d}\right) =\frac{x+x^{2}}{\left(1+2x\right)^{2}}\frac{d}{2}\left(1+5x+\frac{8}{\gamma^{2}}x^{2}\right).
\end{align*}
Thus it suffices to show that for all $x\in(0,1)$,
\[
\frac{5}{2}x^{2}+\log\left(1+x\right)\leq\frac{x+x^{2}}{\left(1+2x\right)^{2}}\left(1+5x+\frac{8}{\gamma^{2}}x^{2}\right).
\]
This can be shown by elementary calculus. See details in \cite[page 46]{Vogt-2019a}.
\end{proof}

\begin{proof}[Proof of Theorem \ref{thm:SharpUpper}]
Let
\[
f\left(x,y\right):=x^{p}+2^{p}\, p\, C_{d}\left(1+\frac{1}{\sqrt{y}}\right)^{d/2}\frac{1}{\left(1-y\right)^{p}}\Gamma\left(p,\left(1-y\right)x/2\right),
\]
where $C_{d}=\frac{e^{d/4}\sqrt{2}}{\left(8d\right)^{d/4}}\sqrt{\frac{\Gamma\left(d\right)}{\Gamma\left(d/2\right)}}$.
Then 
\[
2^p\Gamma(p+1)C_1(d,p)=\inf_{x>0,0<y<1}f(x,y).
\]
First by letting
\[
f_{x}(x,y)=px^{p-1}\left(1-C_{d}e^{-(1-y)x/2}\left(1+\frac{1}{\sqrt{y}}\right)^{d/2}\right)=0
\]
we obtain the critical point
\begin{equation}
x_{y}=\frac{2}{(1-y)}\log\left[C_{d}\left(1+\frac{1}{\sqrt{y}}\right)^{d/2}\right].\label{eq:Semi2}
\end{equation}
Hence
\begin{align*}
f\left(x_{y},y\right) & =\frac{2^{p}}{\left(1-y\right)^{p}}\left(\log\left[C_{d}\left(1+\frac{1}{\sqrt{y}}\right)^{d/2}\right]\right)^{p}\\
 & +p2^{p}C_{d}\left(1+\frac{1}{\sqrt{y}}\right)^{d/2}\frac{1}{\left(1-y\right)^{p}}\Gamma\left(p,\log\left[C_{d}\left(1+\frac{1}{\sqrt{y}}\right)^{d/2}\right]\right).
\end{align*}
It is known that (for instance see \cite[6.1.18]{Abramowitz-Stegun-1964})
\[
\frac{\Gamma\left(d\right)}{\Gamma\left(d/2\right)}=\frac{\Gamma\left(2(d/2)\right)}{\Gamma\left(d/2\right)}\leq2^{d-1/2}\left(\frac{d}{2e}\right)^{d/2},
\] 
hence we have 
\begin{equation}\label{Cd-Ineq}
C_d\le 2^{-d/2+1/4}.
\end{equation}
Combining \eqref{Log-Cd-Ineq} and \eqref{Cd-Ineq} we get 
\begin{eqnarray}
\log\left[C_{d}\left(1+\frac{1}{\sqrt{y_{d}}}\right)^{d/2}\right]
&\leq&\log\left[2^{\frac{1}{4}-\frac{d}{2}}\left(1+\frac{1}{\sqrt{y_{d}}}\right)^{d/2}\right]\nonumber\\
&\leq&\left(1-y_{d}\right)\left(\frac{d}{8}+c\sqrt{d}+1\right)-1.\label{eq:Semi3}
\end{eqnarray}
Using $(\ref{eq:Semi3})$ in $f\left(x_{y},y\right)$ we then obtain 
\begin{align}\label{eq:Semi4b}
f\left(x_{y_{d}},y_{d}\right)  \leq2^{p}\left(\left(\frac{d}{8}+c\sqrt{d}+1\right)-\frac{1}{1-y_{d}}\right)^{p} + \frac{p2^{p}}{\left(1-y_{d}\right)^{p}} II, 
\end{align}
where $$II=C_{d}\left(1+\frac{1}{\sqrt{y_{d}}}\right)^{d/2}\Gamma\left(p,\log\left[C_{d}\left(1+\frac{1}{\sqrt{y_{d}}}\right)^{d/2}\right]\right).$$ 
Making the substitution $x=u\log\left[C_{d}\left(1+\frac{1}{\sqrt{y_{d}}}\right)^{d/2}\right]$ and plugging in \eqref{eq:Semi3} we have
\begin{align}
II 
 & =C_{d}\left(1+\frac{1}{\sqrt{y_{d}}}\right)^{d/2}\left(\log\left[C_{d}\left(1+\frac{1}{\sqrt{y_{d}}}\right)^{d/2}\right]\right)^{p}\\&\times \left[\int_{1}^{\infty}u^{p-1}\left(C_{d}\left(1+\frac{1}{\sqrt{y_{d}}}\right)^{d/2}\right)^{-u}du\right]\nonumber \\
 & \leq 
 C_{d}\left(1+\frac{1}{\sqrt{y_{d}}}\right)^{d/2}\left(1-y_{d}\right)^{p}\left(\left(\frac{d}{8}+c\sqrt{d}+1\right)-\frac{1}{\left(1-y_{d}\right)}\right)^{p}\\
&\times \left[\int_{1}^{\infty}u^{p-1}\left(C_{d}\left(1+\frac{1}{\sqrt{y_{d}}}\right)^{d/2}\right)^{-u}du\right].\nonumber \label{eq:IIu1}
\end{align}
Moreover, clearly from \eqref{eq:Semi3} we have 
\begin{equation}
\left(C_{d}\left(1+\frac{1}{\sqrt{y_{d}}}\right)^{d/2}\right)^{-u}\leq e^{-u\left[\left(1-y_{d}\right)\left(\frac{d}{8}+c\sqrt{d}+1\right)-1\right]}.\label{eq:IIu2}
\end{equation}
Hence
\begin{equation}
II\leq\left(1-y_{d}\right)^{p}\left(\left(\frac{d}{8}+c\sqrt{d}+1\right)-\frac{1}{1-y_{d}}\right)^{p}\int_{1}^{\infty}u^{p-1}e^{\left(1-u\right)\left[\left(1-y_{d}\right)\left(\frac{d}{8}+c\sqrt{d}+1\right)-1\right]}du.\label{eq:Semi4}
\end{equation}
Using $(\ref{eq:Semi4})$ in $(\ref{eq:Semi4b})$ we arrive at
\begin{align*}
f\left(x_{y_{d}},y_{d}\right) &\leq
 2^{p}\left(\frac{d}{8}+c\sqrt{d}+1-\frac{1}{1-y_{d}}\right)^{p}\left[1+p\int_{1}^{\infty}u^{p-1}e^{\left(1-u\right)\left[\left(1-y_{d}\right)\left(\frac{d}{8}+c\sqrt{d}+1\right)-1\right]}du\right].
\end{align*}
\end{proof}
In the lemma below we show that our result is indeed sharp, by comparing to a unit ball. 

\begin{lemma}
\label{Lemma-Ball}
Let $B\left(0,1\right)\subset\mathbb{R}^{d}$ be
the unit ball centered at zero, then 
\[
\left(\frac{d}{4}\right)^{p}\leq\lambda_{1}^{p}\left(B\left(0,1\right)\right)\sup_{x\in B\left(0,1\right)}\mathbb{E}_{x}\left[\tau_{B\left(0,1\right)}^{p}\right],
\]
for $p\geq 1$. 
\end{lemma}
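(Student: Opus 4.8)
The plan is to bound $G_{p,d}$ at the unit ball from below by its value at the center and to reduce the whole statement to a single, sharp lower bound on the first Dirichlet eigenvalue of the ball. First I would note that the supremum over starting points dominates the value at the origin, so $\sup_{x\in B(0,1)}\mathbb{E}_{x}[\tau_{B(0,1)}^{p}]\ge\mathbb{E}_{0}[\tau_{B(0,1)}^{p}]$, and since $p\ge1$, Jensen's inequality applied to the convex function $t\mapsto t^{p}$ on $[0,\infty)$ gives $\mathbb{E}_{0}[\tau_{B(0,1)}^{p}]\ge\bigl(\mathbb{E}_{0}[\tau_{B(0,1)}]\bigr)^{p}$. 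Thus it is enough to prove the $p=1$ inequality $\lambda_{1}(B(0,1))\,\mathbb{E}_{0}[\tau_{B(0,1)}]\ge d/4$.

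Next I would make the two ingredients explicit. The torsion function of $B(0,1)$ satisfies $-\Delta u_{1}=1$ with $u_{1}\in H_{0}^{1}(B(0,1))$, so $u_{1}(x)=\frac{1-|x|^{2}}{2d}$ and hence $\mathbb{E}_{x}[\tau_{B(0,1)}]=2u_{1}(x)=\frac{1-|x|^{2}}{d}$, which gives $\mathbb{E}_{0}[\tau_{B(0,1)}]=1/d$. On the other hand, the first Dirichlet eigenfunction of $B(0,1)$ is radial and proportional to $r^{1-d/2}J_{d/2-1}(\sqrt{\lambda}\,r)$, so $\lambda_{1}(B(0,1))=j_{d/2-1}^{2}$, where $j_{\nu}$ denotes the first positive zero of the Bessel function $J_{\nu}$. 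Substituting, the desired inequality becomes $j_{d/2-1}^{2}\ge d^{2}/4$, i.e.\ $j_{\nu}\ge\nu+1$ with $\nu=\tfrac{d}{2}-1\ge0$.

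The last step, which is the crux, is to establish $j_{\nu}\ge\nu+1$ for $\nu\ge0$. The classical bound $j_{\nu}\ge\sqrt{\nu(\nu+2)}$ only gives $\lambda_{1}(B(0,1))\ge d^{2}/4-1$ and so just misses; I would instead invoke a slightly sharper, standard lower bound for the first Bessel zero, for instance the McMahon-type estimate $j_{\nu}\ge\nu+2^{-1/3}a_{1}\,\nu^{1/3}$ valid for $\nu>0$, where $a_{1}=2.3381\ldots$ and $-a_{1}$ is the first zero of the Airy function, so that $2^{-1/3}a_{1}\approx1.856$. Since $2^{-1/3}a_{1}\,\nu^{1/3}\ge1$ whenever $\nu\ge1$, this yields $j_{\nu}\ge\nu+1$ for all $d\ge4$, while the two remaining cases are immediate: $j_{0}=2.404\ldots>1$ and $j_{1/2}=\pi>3/2$. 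Combining the three steps, $\lambda_{1}^{p}(B(0,1))\sup_{x\in B(0,1)}\mathbb{E}_{x}[\tau_{B(0,1)}^{p}]\ge(d^{2}/4)^{p}\cdot d^{-p}=(d/4)^{p}$.

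I expect the Bessel estimate to be the only real obstacle. For $p=1$ the lemma is sharp to leading order in $d$ (indeed $\lambda_{1}(B(0,1))\,\mathbb{E}_{0}[\tau_{B(0,1)}]=j_{d/2-1}^{2}/d\sim d/4$ as $d\to\infty$), so there is no slack in the reduction above, and one genuinely needs a bound on $j_{\nu}$ stronger than $\sqrt{\nu(\nu+2)}$. For $p>1$ Jensen is not tight, and one could instead compute $\mathbb{E}_{0}[\tau_{B(0,1)}^{p}]$ exactly from the recursion $-\Delta u_{k}=u_{k-1}$ (which leaves room to spare), but the binding case $p=1$ still rests on the Bessel inequality together with the finite check for $d=2,3$.
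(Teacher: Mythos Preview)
Your proposal is correct and follows essentially the same route as the paper: bound the supremum below by the value at the origin, apply Jensen for $p\ge 1$, use $\mathbb{E}_{0}[\tau_{B(0,1)}]=1/d$, and invoke $\lambda_{1}(B(0,1))\ge d^{2}/4$. The only difference is cosmetic: the paper simply cites Lorch for the inequality $\lambda_{1}(B(0,1))\ge d^{2}/4$ (equivalently $j_{d/2-1}\ge d/2$), whereas you rederive it via an Airy-type lower bound on $j_{\nu}$ together with a direct check for $d=2,3$.
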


\begin{proof}
It is well known that $\lambda_{1}\left(B(0,1)\right)\geq\frac{d^{2}}{4}$ (for instance, see  \cite{Lorch}).
By a simple calculation we have that $\mathbb{E}_{x}\left[\tau_{B(0,1)}\right]=\frac{1-\left|x\right|^{2}}{d}.$
Hence $$\sup_{x\in B\left(0,1\right)}\mathbb{E}_{x}\left[\tau_{B\left(0,1\right)}\right]=\mathbb{E}_{0}\left[\tau_{B\left(0,1\right)}\right]=\frac{1}{d}.$$
By Jensen's inequality we have 
\begin{align*}
\lambda_{1}^{p}\left(B\left(0,1\right)\right)\cdot\mathbb{E}_{0}\left[\tau_{B\left(0,1\right)}^{p}\right] & \geq\lambda_{1}^{p}\left(B\left(0,1\right)\right)\cdot\left(\mathbb{E}_{0}\left[\tau_{B\left(0,1\right)}\right]\right)^{p}=\frac{d^{p}}{4^{p}}.
\end{align*}
\end{proof}

\begin{proof}[Proof of \eqref{asymtotic-bound} of Theorem \ref{asymtotic}]
From Theorem \ref{thm:SharpUpper} we have
\[
M_{p,d}(\X)\leq2^{p}\left(\frac{d}{8}+c\sqrt{d}+1-\frac{1}{1-y_{d}}\right)^{p}C_{2}\left(d,p\right),
\]
where 
\[
C_{2}(d,p):=1+p\int_{1}^{\infty}u^{p-1}e^{\left(1-u\right)\left[\left(1-y_{d}\right)\left(\frac{d}{8}+c\sqrt{d}+1\right)-1\right]}du,
\]
and 
\[
y_{d}=\frac{1}{\left(1+\frac{16c}{5\sqrt{d}}\right)^{2}},\,\,\,c=\frac{1}{4}\sqrt{5\left(1+\frac{1}{4}\log2\right)}.
\]
First we claim that $\lim_{d\to\infty}C_2(d,p)=1$. Note that 
\[
\left(1-y_{d}\right)\left(\frac{d}{8}+c\sqrt{d}+1\right)-1\ge 1+\frac{4c}{5}\sqrt{d}\to\infty
\]
as $d\to\infty$. 
Hence when $u\ge 1$ we have
\[
\lim_{d\to\infty}u^{p-1}e^{\left(1-u\right)\left[\left(1-y_{d}\right)\left(\frac{d}{8}+c\sqrt{d}+1\right)-1\right]}=0,
\]
Moreover, since
\[
\int_{1}^{\infty}u^{p-1}e^{\left(1-u\right)\left[\left(1-y_{d}\right)\left(\frac{d}{8}+c\sqrt{d}+1\right)-1\right]}du
\le
\int_{1}^{\infty}u^{p-1}e^{\left(1-u\right)}du\le  e\Gamma(p),
\]
by the dominated convergence theorem we obtain that
\begin{equation}
\lim_{d\to\infty}\int_{1}^{\infty}u^{p-1}e^{\left(1-u\right)\left[\left(1-y_{d}\right)\left(\frac{d}{8}+c\sqrt{d}+1\right)-1\right]}du=0.\label{eq:MCT}
\end{equation}
%
%
It now follows readily that 
\[
\limsup_{d\to\infty}\frac{M_{p,d}(\X)}{d^{p}}\leq\frac{1}{4^{p}}.
\]
Together with Lemma \ref{Lemma-Ball}  we then obtain that $$
\frac{1}{4^{p}}\leq\liminf_{d\to\infty}\frac{M_{p,d}(\X)}{d^{p}},
$$
and concludes the proof of the asymptotic bound \eqref{asymtotic-bound} of Theorem \ref{asymtotic}.

\end{proof}


\section{Lower bound for $\mpd$: proof of \eqref{LowerBound-Exittime-p} of Theorem \ref{asymtotic}}\label{sec:LowerBound}

We remark that the lower bound in \eqref{LowerBound-Exittime-p} for $p=1$ has been known for many years, as mentioned in  \cite{Banuelos-Carrol1994}.

\begin{proof}[Proof of \eqref{LowerBound-Exittime-p} of Theorem \ref{asymtotic}]

We first prove the inequality. Let us assume for the moment that the domain $D$ is bounded (or even just that it has finite volume). In this case we have a discrete spectrum with a complete set of eigenfunctions on $L^2(D)$  and the eigenfunction $\varphi_1$  corresponding to $\lambda_1(D)$ is in $L^{\infty}(D)$.  For this, we refer the reader to \cite{Davies1989a}.    Since

\begin{equation}\label{eq:EigenfuncEq1}
e^{-\lambda_{1}t/2}\varphi_{1}(x)=\int_{D}p_{D}(x,y,t)\varphi_{1}(y)dy
\end{equation}
integrating in time we find that
\begin{align}
\varphi_1(x)\frac{2^{p}p}{\lambda_{1}^{p}(D)}\Gamma\left(p\right)&=\varphi_{1}(x)\int_{0}^{\infty}pt^{p-1}e^{-\lambda_{1}t/2}dt\\
 & =p\int_{0}^{\infty}\int_{D}t^{p-1}p_{D}(x,y,t)\varphi_{1}(y)dydt\nonumber \\
 & \leq \sup_{y\in D}\varphi_{1}(y)\cdot \bigg(p\int_{0}^{\infty}\int_{D}t^{p-1}p_{D}(x,y,t)dydt\bigg)\nonumber \\
 & =\sup_{y\in D}\varphi_{1}(y)\, \mathbb{E}_{x}\left[\tau_{D}^{p}\right].   \label{eq:LowerBound-1}
\end{align}
Since $p\Gamma(p)=\Gamma(p+1)$, this gives the desired lower bound by taking a supremum over all $x\in D$. 


To remove the boundedness assumption on $D$,   let $r>0$ and consider
the open set $D\cap B\left(0,r\right)$ which is nonempty for large
enough $r$. 
Since $D\cap B(0,r)\subset D$, we have $\mathbb{E}_{x}\left[\tau_{D\cap B(0,r)}^{p}\right]\leq\mathbb{E}_{x}\left[\tau_{D}^{p}\right]$
and it follows that 
\[
\sup_{x\in D\cap B\left(0,r\right)}\mathbb{E}_{x}\left[\tau_{D}^{p}\right]\geq\sup_{x\in D\cap B\left(0,r\right)}\mathbb{E}_{x}\left[\tau_{D\cap B(0,r)}^{p}\right]\geq2^{p}\Gamma\left(p+1\right)\cdot\bigg(\lambda_{1}\left(D\cap B\left(0,r\right)\right)\bigg)^{-p}.
\]
Taking $r\to\infty$ completes the proof of the lower bound. 

It remains to prove the sharpness of \eqref{LowerBound-Exittime-p} for integers $p$.  For any $d\geq 2$, it is shown in \cite[Theorem 1]{Vandenberg-2017a} and \cite[Theorem 3.3]{Henrot-Lucardesi-Philippin-2018} that there  exists a sequence of bounded domains $D_{\epsilon_{n}}\subset\mathbb{R}^{d}$
satisfying

\begin{equation}
2\leq \lambda_1(D_{\epsilon_{n}})\sup_{x\in D_{\epsilon_{n}}} \E_x[\tau_{D_{\epsilon_{n}}} ]<2+\epsilon_{n},\label{VandenVerg-Sets}
\end{equation}
where $\epsilon_{n}\to0$, as $n\to\infty$.  
To finish, we need the following  inequality whose proof we provide here for completeness. (See for example \cite[Corollary 1]{Banuelos-1987} and \cite[Lemma 18.1]{Burkholder-1973})

\begin{lemma}
\label{lem:FactorialMomentBound}Let $D\subset\mathbb{R}^{d}$ be
a domain satisfying $\sup_{x\in D}\mathbb{E}_{x}\left[\tau_{D}\right]<\infty$. 
Then for any $k\in\mathbb{N}$, 
\[
\mathbb{E}_{x}\left[\tau_{D}^{k}\right]\leq k!\left(\sup_{x\in D}\mathbb{E}_{x}\left[\tau_{D}\right]\right)^{k},\,\,\,\,\,\,\, x\in D. 
\]
\end{lemma}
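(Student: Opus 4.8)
The plan is to induct on $k$, combining the Markov property of Brownian motion with the deterministic identity
\[
T^{k}=k\int_{0}^{T}(T-t)^{k-1}\,dt,\qquad T\ge 0,
\]
which is immediate from the substitution $u=T-t$. Set $M:=\sup_{x\in D}\mathbb{E}_{x}[\tau_{D}]$, finite by hypothesis. The base case $k=1$ is just the definition of $M$; note that no boundedness of $D$ is needed anywhere, only $M<\infty$, which is one advantage of this probabilistic route. For the inductive step I would assume, for some $k\ge 2$, that $\sup_{x\in D}\mathbb{E}_{x}[\tau_{D}^{k-1}]\le (k-1)!\,M^{k-1}<\infty$.

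Applying the identity with $T=\tau_{D}$ and taking $\mathbb{E}_{x}$ (Tonelli), one gets
\[
\mathbb{E}_{x}[\tau_{D}^{k}]=k\int_{0}^{\infty}\mathbb{E}_{x}\!\left[(\tau_{D}-t)^{k-1}\mathbf{1}_{\{\tau_{D}>t\}}\right]dt .
\]
On the event $\{\tau_{D}>t\}$ one has $B_{t}\in D$ and $\tau_{D}=t+\tau_{D}\circ\theta_{t}$, where $\theta_{t}$ is the shift operator, so that $(\tau_{D}-t)^{k-1}\mathbf{1}_{\{\tau_{D}>t\}}=\mathbf{1}_{\{\tau_{D}>t\}}\,(\tau_{D}\circ\theta_{t})^{k-1}$; since $\{\tau_{D}>t\}\in\mathcal{F}_{t}$, the Markov property at time $t$ gives
\[
\mathbb{E}_{x}\!\left[(\tau_{D}-t)^{k-1}\mathbf{1}_{\{\tau_{D}>t\}}\right]=\mathbb{E}_{x}\!\left[\mathbf{1}_{\{\tau_{D}>t\}}\,\mathbb{E}_{B_{t}}[\tau_{D}^{k-1}]\right]\le\Big(\sup_{y\in D}\mathbb{E}_{y}[\tau_{D}^{k-1}]\Big)\,\mathbb{P}_{x}(\tau_{D}>t).
\]
Integrating in $t$ and using $\int_{0}^{\infty}\mathbb{P}_{x}(\tau_{D}>t)\,dt=\mathbb{E}_{x}[\tau_{D}]\le M$ yields $\mathbb{E}_{x}[\tau_{D}^{k}]\le kM\sup_{y\in D}\mathbb{E}_{y}[\tau_{D}^{k-1}]$. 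Taking the supremum over $x\in D$ and inserting the inductive hypothesis gives $\sup_{x\in D}\mathbb{E}_{x}[\tau_{D}^{k}]\le kM\cdot(k-1)!\,M^{k-1}=k!\,M^{k}$, which closes the induction and proves the lemma.

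The steps are elementary; the only places that need a careful word are the pathwise additivity $\tau_{D}=t+\tau_{D}\circ\theta_{t}$ on $\{\tau_{D}>t\}$ (which rests on path continuity together with $B_{t}\in D$ on that event), the measurability bookkeeping in the Markov-property step, and the fact that $\sup_{y\in D}\mathbb{E}_{y}[\tau_{D}^{k-1}]$ is finite so that it may legitimately be pulled out — all of which are furnished by the induction hypothesis. I do not expect any of these to be a genuine obstacle: the heart of the argument is the identity $T^{k}=k\int_{0}^{T}(T-t)^{k-1}\,dt$ together with the one-line integration of the tail probability $\mathbb{P}_{x}(\tau_{D}>t)$.
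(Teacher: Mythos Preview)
Your proof is correct and is essentially the same as the paper's: both use the Markov property (on $\{\tau_D>t\}$, $\tau_D-t=\tau_D\circ\theta_t$) together with induction on $k$. The only cosmetic difference is that the paper first records the tail inequality $\int_a^\infty\mathbb{P}_x(\tau_D>t)\,dt\le M\,\mathbb{P}_x(\tau_D>a)$ and then integrates it against $ka^{k-1}\,da$ to obtain the recursion $\mathbb{E}_x[\tau_D^{k+1}]\le(k+1)M\,\mathbb{E}_x[\tau_D^k]$, whereas you reach the equivalent recursion directly via the identity $T^k=k\int_0^T(T-t)^{k-1}\,dt$.
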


\begin{proof}
By the Markov property and Fubini's theorem we have for any $a\geq 0$, 
\begin{align}
&\int_{a}^{\infty}\mathbb{P}_{x}\left(\tau_{D}>t\right)dt  =\int_{0}^{\infty}\mathbb{P}_{x}\left(\tau_{D}>t+a\right)dt\nonumber \\
 &\qquad =\int_{0}^{\infty}\mathbb{E}_{x}\left[1_{\left(\tau_{D}>a\right)}\mathbb{P}_{X_{a}}\left(\tau_{D}>t\right)\right]dt
  =\mathbb{E}_{x}\left[1_{\left(\tau_{D}>a\right)}\mathbb{E}_{X_{a}}\left[\tau_{D}\right]\right]\nonumber \\
 &\qquad \leq\left(\sup_{x\in D}\mathbb{E}_{x}\left[\tau_{D}\right]\right)\mathbb{P}_{x}\left(\tau_{D}>a\right)\label{eq:MomentBound1}.
\end{align}
Multiplying both sides by $ka^{k-1}$ and integrating on $a$ gives that 
\begin{align*}
&\int_{0}^{\infty}ka^{k-1}\int_{a}^{\infty}\mathbb{P}_{x}\left(\tau_{D}>t\right)dtda 
=\int_0^{\infty} t^k\P_x\left(\tau_D>t\right) dt
 =\frac{1}{k+1}\mathbb{E}_{x}\left[\tau_{D}^{k+1}\right],
\end{align*}
and
\[
\left(\sup_{x\in D}\mathbb{E}_{x}\left[\tau_{D}\right]\right)\int_{0}^{\infty}ka^{k-1}\mathbb{P}_{x}\left(\tau_{D}>a\right)da=\left(\sup_{x\in D}\mathbb{E}_{x}\left[\tau_{D}\right]\right)\mathbb{E}_{x}\left[\tau_{D}^{k}\right].
\]
The desired inequality then follows by induction. 
\end{proof}

Returning to the sharpness of inequality \eqref{LowerBound-Exittime-p}, fix $k\in\mathbb{N}$.  Let $D_{\epsilon_{n}}$ be the domains satisfying  \eqref{VandenVerg-Sets}. We claim that 
\[
\lambda_{1}\left(D_{\epsilon_{n}}\right)^{k}\cdot\sup_{x\in D_{\epsilon_{n}}}\mathbb{E}_{x}\left[\tau_{D_{\epsilon_{n}}}^{k}\right]\leq 2^kk!+2k\cdot k!\epsilon_n+o(\epsilon_n)
\]
where $\epsilon_{n}\to0$ as $n\to\infty$. 

 Indeed, from  Lemma \ref{lem:FactorialMomentBound} and the estimate $(\ref{VandenVerg-Sets})$. 
\begin{align*}
&\lambda_{1}\left(D_{\epsilon_{n}}\right)^{k}\cdot\sup_{x\in D_{\epsilon_{n}}}\mathbb{E}_{x}\left[\tau_{D_{\epsilon_{n}}}^{k}\right] \leq\lambda_{1}\left(D_{\epsilon_{n}}\right)^{k}k!\left(\sup_{x\in D_{\epsilon_{n}}}\mathbb{E}_{x}\left[\tau_{D_{\epsilon_{n}}}\right]\right)^{k}\\
 &\qquad =k!\left(\lambda_{1}\left(D_{\epsilon_n}\right)\cdot\sup_{x\in D_{\epsilon_{n}}}\mathbb{E}_{x}\left[\tau_{D_{\epsilon_{n}}}\right]\right)^{k}
  \leq k!\left(2+\epsilon_{n}\right)^{k}\\
 & \qquad= 2^kk!+2k\cdot k!\epsilon_n+o(\epsilon_n).\\
\end{align*}
This proves the sharpness of the inequality \eqref{LowerBound-Exittime-p} and completes the proof of the Theorem. 
\end{proof}
It is reasonable to conjecture that under the same assumptions as in Lemma \ref{lem:FactorialMomentBound},
the inequality  
\begin{equation}
\mathbb{E}_{x}\left[\tau_{D}^{p}\right]\leq\Gamma\left(p+1\right)\left(\sup_{x\in D}\mathbb{E}_{x}\left[\tau_{D}\right]\right)^{p}\label{eq:GeneralpMomentbound}
\end{equation}
holds for any  $p\geq 1$.   
This leads us to the following conjecture.

\begin{conjecture}\label{ConjecLower}
The lower bounds $(\ref{LowerBound-Exittime-p})$ 
is sharp for any $p\geq 1$.
\end{conjecture}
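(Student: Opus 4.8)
The plan is to reduce Conjecture~\ref{ConjecLower} to the pointwise comparison \eqref{eq:GeneralpMomentbound} for all real $p\ge1$, after which the conclusion follows from the extremal sequence $\{D_{\epsilon_{n}}\}$ of \eqref{VandenVerg-Sets} exactly as in the integer case treated above. Indeed, once \eqref{eq:GeneralpMomentbound} is available, for each $n$ one has
\[
G_{p,d}(D_{\epsilon_{n}})=\lambda_{1}(D_{\epsilon_{n}})^{p}\sup_{x}\mathbb{E}_{x}[\tau_{D_{\epsilon_{n}}}^{p}]\le\Gamma(p+1)\Big(\lambda_{1}(D_{\epsilon_{n}})\sup_{x}\mathbb{E}_{x}[\tau_{D_{\epsilon_{n}}}]\Big)^{p}<\Gamma(p+1)(2+\epsilon_{n})^{p},
\]
so $\mpd\le\lim_{n\to\infty}\Gamma(p+1)(2+\epsilon_{n})^{p}=2^{p}\Gamma(p+1)$, which together with \eqref{LowerBound-Exittime-p} forces $\mpd=2^{p}\Gamma(p+1)$.

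Thus everything rests on establishing \eqref{eq:GeneralpMomentbound}, and the main point is to find the right way to pass from integer to real $p$. Interpolating between the integer bounds of Lemma~\ref{lem:FactorialMomentBound} via Lyapunov's inequality only gives the constant $\Gamma(k+1)^{1-\theta}\Gamma(k+2)^{\theta}$ for $p=(1-\theta)(k+1)+\theta(k+2)$, and log-convexity of $\Gamma$ makes this larger than $\Gamma(p+1)$, i.e.\ too weak. Instead I would upgrade the key estimate \eqref{eq:MomentBound1} to a differential inequality. Set $M:=\sup_{y\in D}\mathbb{E}_{y}[\tau_{D}]<\infty$, fix $x\in D$, and let $G(a):=\int_{a}^{\infty}\mathbb{P}_{x}(\tau_{D}>t)\,dt$. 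Then $G$ is non-increasing and absolutely continuous with $G(0)=\mathbb{E}_{x}[\tau_{D}]\le M$ and $G'(a)=-\mathbb{P}_{x}(\tau_{D}>a)$ for a.e.\ $a$, so \eqref{eq:MomentBound1} reads $G(a)\le-M\,G'(a)$. Hence $(\log G)'(a)\le-1/M$ on $\{G>0\}$, and integrating yields the exponential tail bound $G(a)\le Me^{-a/M}$ for all $a\ge0$; if $G(a_{0})=0$ for some finite $a_{0}$ then $G\equiv0$ on $[a_{0},\infty)$ and the bound is trivial there.

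With the tail bound in hand, for $p>1$ I would integrate by parts, the boundary terms vanishing because $p>1$ and $G$ decays exponentially, to obtain
\[
\mathbb{E}_{x}[\tau_{D}^{p}]=p\int_{0}^{\infty}t^{p-1}\mathbb{P}_{x}(\tau_{D}>t)\,dt=p(p-1)\int_{0}^{\infty}t^{p-2}G(t)\,dt\le p(p-1)M\int_{0}^{\infty}t^{p-2}e^{-t/M}\,dt=p(p-1)\Gamma(p-1)\,M^{p}=\Gamma(p+1)\,M^{p},
\]
which is \eqref{eq:GeneralpMomentbound}; the case $p=1$ is immediate. Taking the supremum over $x\in D$ and feeding the result into the first display completes the argument. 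The steps requiring care are the justification of the integration by parts and of the differential inequality for the merely absolutely continuous $G$, and the degenerate case where $G$ vanishes---all routine. A useful byproduct is that $G(a)\le Me^{-a/M}$ shows a posteriori that $\sup_{y\in D}\mathbb{E}_{y}[\tau_{D}]<\infty$ alone forces all $p$-moments with $p\ge1$ to be finite, so no separate integrability hypothesis is needed.
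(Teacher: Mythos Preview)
The paper does not prove Conjecture~\ref{ConjecLower}; it is left as an open problem, motivated by the remark that the pointwise inequality \eqref{eq:GeneralpMomentbound} would suffice. Your proposal actually supplies a complete and correct proof of \eqref{eq:GeneralpMomentbound}, and hence resolves the conjecture.

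The key step the authors did not take is to reinterpret the Markov-property estimate \eqref{eq:MomentBound1} as a differential inequality $G(a)\le -M\,G'(a)$ for the integrated tail $G(a)=\int_a^\infty\P_x(\tau_D>t)\,dt$, and then integrate (Gronwall) to obtain $G(a)\le Me^{-a/M}$. The paper only iterates \eqref{eq:MomentBound1} discretely to get Lemma~\ref{lem:FactorialMomentBound} for integer $p$; your continuous version is strictly stronger and yields the sharp constant $\Gamma(p+1)$ for all real $p>1$ via the integration by parts you describe. All the technical points check out: $G$ is Lipschitz (hence absolutely continuous) with $G'(a)=-\P_x(\tau_D>a)$ a.e.; the boundary terms vanish since $p>1$ and $G$ decays exponentially; and $\int_0^\infty t^{p-2}e^{-t/M}\,dt=M^{p-1}\Gamma(p-1)$ together with $p(p-1)\Gamma(p-1)=\Gamma(p+1)$ gives exactly the desired constant. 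The reduction to the extremal sequence $\{D_{\epsilon_n}\}$ from \eqref{VandenVerg-Sets} then goes through verbatim as in the integer case. In short, you have not matched the paper's proof---there is none---but supplied one.
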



\section{Extremal Domains for $M_{p, d}(\cC)$}\label{Sec:Extremal}

The main goal of this section is to prove that the shape functional $$G_{p,d}\left(D\right)  =\lambda_{1}^{p}\left(D\right)\sup_{x\in D}\mathbb{E}_{x}\left[\tau_{D}^{p}\right] $$
admits a maximizer in
the class of bounded convex domains
in $\mathbb{R}^{d}$ or in the class of convex domains which are symmetric
with respect to each coordinate axis.

\subsection{Motivation and preliminary discussions}\label{Sec:5.1}
While balls appear to be extremals for several isoperimetric type inequalities (including the classical ones \eqref{egen1} and \eqref{Lifetime}), surprisingly they are not extremals for $\Mpd$. For instance it is observed in \cite[pg. 599]{Banuelos-Carrol1994} that 
\begin{equation}\label{B-vs-T}
\lambda_1\left(B\right)\sup_{x\in B}\E_{x}\left[\tau_{B}\right]< \lambda_1\left(\T\right)\sup_{x\in \T}\E_{x}\left[\tau_{\T}\right],
\end{equation}
where $\T$ is is the equilateral triangle (see also  \cite[Corollary 3.7]{Henrot-Lucardesi-Philippin-2018}). 
Moreover it  was conjectured in \cite{Henrot-Lucardesi-Philippin-2018} that no extremal domain exists over
the class of all domains. 

Therefore it is reasonable, when looking for extremals, to restrict the class of domains. 
When restricted to the class of convex domains, Payne showed in \cite{Payne-1981} that
\begin{equation}\label{PayneConvex}
m_{1, d}(\cC)=\frac{\pi^{2}}{4}.
\end{equation}
From this it follows trivially that the minimizer domain over convex domains is given by the infinite slab $S_d=\mathbb{R}^{d-1}\times\left(-1,1\right)$.  The existence of extremal for $M_{1, d}(\cC)$ is proved in  \cite{Henrot-Lucardesi-Philippin-2018}. The authors further conjectured that when 
$d=2$, the equilateral triangle $\T$ is an extremal. That is,  
\begin{equation}\label{ConvexConj}
M_{1, 2}(\cC)=  \lambda_1\left(\T\right)\sup_{x\in \T}\E_{x}\left[\tau_{\T}\right].
\end{equation}

\subsection{Motivation, symmetric convex domains}\label{Sec:5.2}


Another class of domains that is worth investigation is the class of doubly symmetric planar domains $\SC$. 

Regarding the question of extremals, it is not hard to see the ball fails again in the class $\SC$. In fact, with brief computations below we can show that 
\begin{equation}\label{Ball-vs-Square}
\lambda_{1}(B)\sup_{x\in B}\E_{x}\left[\tau_{B}\right]< \lambda_{1}(Q_2)\sup_{x\in Q_2}\E_{x}\left[\tau_{Q_2}\right].
\end{equation}
First note that in both cases 
$$\sup_{x\in B}\E_{x}\left[\tau_{B}\right]=\E_{(0, 0)}\left[\tau_{B}\right]$$ and $$\sup_{x\in Q_2}\E_{x}\left[\tau_{Q_2}\right]=\E_{(0,0)} \left[\tau_{Q_2}\right].$$  Furthermore,   
\begin{equation}\label{BallCom} 
\lambda_{1}(B){\E_{(0,0)}}\left[\tau_{B}\right]=\frac{j_{0}^{2}}{2}\approx2.8916,
\end{equation} 
where $j_0$ is the first positive root of the first Bessel function. 

On the other hand, $\lambda_{1}\left(Q_2\right)=\frac{\pi^{2}}{2}$ and by independence,  
$$
\P_{(0, 0)}(\tau_{Q_2}>t)=\P_0(\tau_I>t)\P_0(\tau_I>t),
$$
where $I=(-1,1)$.  The eigenfunction expansion for the heat kernel for the interval $I$ (see \cite{Coffey-2012,Markowsky-2011}) leads to the formula  
\begin{equation}\label{Exit-Interval} 
\mathbb{E}_{(0, 0)}\left[\tau_{Q_2}\right]=\left[1-\frac{32}{\pi^{3}}\sum_{n=0}^{\infty}\frac{\left(-1\right)^{n}}{\left(2n+1\right)^{3}}\text{sech}\left[\left(n+\frac{1}{2}\right)\pi\right]\right]. 
\end{equation}
Thus 
\begin{align}\label{SquareCom} 
\lambda_{1}(Q_2)\mathbb{E}_{0}\left[\tau_{Q_2}\right]
 & =\frac{\pi^{2}}{2}\left[1-\frac{32}{\pi^{3}}\sum_{n=0}^{\infty}\frac{\left(-1\right)^{n}}{\left(2n+1\right)^{3}}\text{sech}\left[\left(n+\frac{1}{2}\right)\pi\right]\right]\\
 & \approx2.90843\nonumber
\end{align}
which verifies \eqref{Ball-vs-Square}. 

Given conjecture \eqref{ConvexConj}, it is reasonable to conjecture that the extremal for $M_{1, 2}(\SC)$, if exists, is given by the square $Q_2:=\{(x, y),  |x|<1, |y|<1\}$.
In the next two sections we will focus on proving the existence of the extremals for both classes $\cC$ and $\SC$.


\subsection{Preliminary results}\label{Sec:5.3}

For any given domain $D\subset \R^d$, define  
$
\M_{p}\left(D\right)  =\sup_{x\in D}\mathbb{E}_{x}\left[\tau_{D}^{p}\right]
$ so that our function from \eqref{eq-shape-func} becomes 
\begin{align*} 
G_{p, d}\left(D\right) & =\lambda_{1}^{p}\left(D\right)\sup_{x\in D}\mathbb{E}_{x}\left[\tau_{D}^{p}\right]=\lambda_{1}^{p}\left(D\right)\M_{p}\left(D\right).
\end{align*}


Recall that for any two compact sets $K_1,K_2\subset\mathbb{R}^{d}$ we  define the \emph{Hausdorff
distance} $d_{\mathcal{H}}$ by
\[
d_{\mathcal{H}}\left(K_1,K_2\right)=\max\left\{ \sup_{x\in  K_1}d\left(x,K_2\right),\sup_{y\in K_2}d\left(K_1,y\right)\right\},
\]
where $d(\cdot,\cdot)$ denotes the Euclidean distance in $\R^d$.
Therefore for any bounded open sets $A,B\subset\R^d$ we have that 
\begin{equation}\label{eq-Haus}
d_{\mathcal{H}}\left(A,B\right)=\max\left\{ \sup_{x\in B\backslash A}\inf_{y\in\partial B}d\left(x,y\right),\sup_{x\in A\backslash B}\inf_{y\in\partial A}d\left(x,y\right)\right\} .
\end{equation}
 This definition is given by \cite[Corollary 2.2.13]{Henrot-Pierre-2018}.
In the sequel we use the fact that inclusion is stable under
convergence with respect to $d_{\mathcal{H}}$. That is, take sets $U_n\subset D_n \subset \mathbb{R}^d$ for all $n$. If $U_{n}\to U$
with respect to $d_{\mathcal{H}}$ and $D_{n}\to D$ with respect
to $d_{\mathcal{H}}$, then $U\subset D$.


\begin{lemma}
\label{Lem:Existence1}If a sequence $\{D_{n}\}_{n=1}^\infty$ in $\mathcal{SC}$ converges to a set $D\in\mathcal{C}$
with respect to the Hausdorff metric, then $D\in \mathcal{SC}$. 
\end{lemma}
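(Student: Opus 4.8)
The plan is to show that each of the defining properties of $\mathcal{SC}$ — convexity and symmetry with respect to each coordinate hyperplane — passes to the Hausdorff limit. Convexity is already handed to us: by hypothesis $D\in\mathcal{C}$, so it remains only to verify that $D$ is invariant under each of the $d$ reflections $R_i\colon x\mapsto x-2x_i e_i$ across the coordinate hyperplane $\{x_i=0\}$. Fix such an $i$ and write $R=R_i$. The key elementary fact is that $R$ is an isometry of $\mathbb{R}^d$, hence it preserves the Hausdorff distance: for bounded open sets $A,B$ one has $d_{\mathcal{H}}(RA,RB)=d_{\mathcal{H}}(A,B)$, because $R$ preserves Euclidean distance and maps boundaries to boundaries, and the formula \eqref{eq-Haus} is built entirely out of these ingredients.

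With that in hand the argument is short. Since each $D_n\in\mathcal{SC}$ we have $RD_n=D_n$ for all $n$. Applying the isometry invariance of $d_{\mathcal{H}}$ and the hypothesis $D_n\to D$,
\[
d_{\mathcal{H}}(RD,D)\le d_{\mathcal{H}}(RD,RD_n)+d_{\mathcal{H}}(RD_n,D)=d_{\mathcal{H}}(D,D_n)+d_{\mathcal{H}}(D_n,D)\longrightarrow 0
\]
as $n\to\infty$, so $d_{\mathcal{H}}(RD,D)=0$. Because $D$ is an open set (it is a convex domain with $\lambda_1(D)>0$, in particular nonempty and open) and $RD$ is open as well, $d_{\mathcal{H}}(RD,D)=0$ forces $RD=D$: if $x\in D\setminus RD$ then, $D$ being open and $RD$ closed in the relevant sense, $\inf_{y\in\partial D}d(x,y)>0$ would contribute a strictly positive term to \eqref{eq-Haus}. (Alternatively, one invokes the standard fact that $d_{\mathcal{H}}$ is a genuine metric on the class of compact convex sets — applied to the closures $\overline{D_n}\to\overline{D}$ — so $d_{\mathcal{H}}(\overline{RD},\overline{D})=0$ gives $\overline{RD}=\overline{D}$, and taking interiors gives $RD=D$.) Since $R=R_i$ was arbitrary among the $d$ coordinate reflections, $D$ is symmetric with respect to every coordinate hyperplane, and combined with $D\in\mathcal{C}$ this is exactly the statement $D\in\mathcal{SC}$.

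I do not expect any serious obstacle here: the only point requiring a little care is the passage from "$d_{\mathcal{H}}$-distance zero" to "equal as sets," which is why it is cleanest to argue at the level of closures (where $d_{\mathcal{H}}$ is a metric) and then take interiors, using that a convex open set equals the interior of its closure. Everything else is the invariance of the Hausdorff distance under a fixed isometry, which is immediate from the definition. The lemma is really a bookkeeping step ensuring that the extremal produced later for $\mathcal{SC}$ (via the compactness and continuity results in Sections \ref{Sec:5.3}–\ref{Sec:5.4}) genuinely lies in $\mathcal{SC}$ rather than merely in $\mathcal{C}$.
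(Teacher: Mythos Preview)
Your proof is correct but proceeds differently from the paper. The paper argues pointwise: given $x\in D$, it uses that $x\in D_n$ for all large $n$ (a consequence of Hausdorff convergence of convex bodies), observes that the coordinate reflections of $x$ therefore lie in $D_n$ by symmetry, and then invokes the stability of inclusion under Hausdorff limits to place those reflected points in $D$. Your argument works globally at the level of sets: you note that each coordinate reflection $R$ is an isometry of $\mathbb{R}^d$, hence preserves $d_{\mathcal{H}}$, and from $RD_n=D_n\to D$ you conclude $d_{\mathcal{H}}(RD,D)=0$, whence $RD=D$ after passing to closures and back to interiors.

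Both routes are short. The paper's pointwise argument is slightly more elementary but tacitly relies on the fact that interior points of the Hausdorff limit are eventually contained in the approximants, which is where convexity is doing real work. Your isometry-invariance argument sidesteps that issue entirely and would apply verbatim to any isometry group one might wish to impose; the only delicate moment is the passage from $d_{\mathcal{H}}=0$ to equality of the open sets, and your suggested fix via closures (using that a nonempty open convex set is the interior of its closure) is the clean way to handle it. The first attempt you sketch, invoking \eqref{eq-Haus} directly with the phrase ``$RD$ closed in the relevant sense,'' is a bit loose and should be replaced by the closure argument you give in parentheses.
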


\begin{proof}
Note that $D$ is open. Take any $x=\left(x_{1},x_{2},\dots,x_{d}\right)\in D$, then $x\in D_{n}$
for $n$ large enough. Since $D_{n}$ is symmetric then $\left(-x_{1},\dots,x_{d}\right),\left(x_{1},-x_{2},\dots,x_{d}\right),\dots,\left(x_{1},x_{2},\dots,-x_{d}\right)\in D_{n}$
for $n$ large enough. Since inclusion is stable under limits of the
Hausdorff distance then $$\left(-x_{1},\dots,x_{d}\right),\left(x_{1},-x_{2},\dots,x_{d}\right),\dots,\left(x_{1},x_{2},\dots,-x_{d}\right)\in D$$
as well. This shows $D$ is symmetric with respect to all axes. Convexity
is well know. 
\end{proof}
We will need the following key estimates on the $p-$moments of exit times in order to prove that $M_p$ is continuous in the class $\mathcal{SC}$ and $\mathcal{C}$.

\begin{lemma}\label{Lem:Existence1a}
Suppose $U\subset D\subset \mathbb{R}^{d}$, where $D$ is a bounded Lipschitz domain and $U$ is a domain.  
\begin{itemize} 
\item[(i)] If $p\geq 1$, then 
\begin{equation}
\sup_{x\in D}\mathbb{E}_{x}\left[\left(\tau_{D}-\tau_{U}\right)^{p}\right]\leq C_{p,D}\sup_{x\in D\backslash{U}}\left(d\left(x,\partial D\right)\right)^{\beta}.\label{lip1}
\end{equation} 
\item [(ii)] If $0<p<1$, then 
\begin{equation}
\sup_{x\in D}\mathbb{E}_{x}\left[\left(\tau_{D}-\tau_{U}\right)^{p}\right]\leq C_{\beta,D}\sup_{x\in D\backslash{U}}\left(d\left(x,\partial D\right)\right)^{\beta p}.\label{lip2}
\end{equation} 
\end{itemize}
Here $\beta>0$ depends on the Lipschitz character of the domain.

\end{lemma}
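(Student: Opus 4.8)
The plan is to control the difference $\tau_D - \tau_U$ by splitting on the position of the Brownian path at the moment it leaves $U$. Concretely, by the strong Markov property at the stopping time $\tau_U$, on the event $\{\tau_U < \tau_D\}$ we have $\tau_D - \tau_U = \tau_D \circ \theta_{\tau_U}$, where $\theta$ is the shift operator and $B_{\tau_U} \in D \setminus U$ (more precisely on $\partial U \cap D$). Hence
\begin{equation}\label{pf-step1}
\mathbb{E}_x\left[(\tau_D - \tau_U)^p\right] = \mathbb{E}_x\left[\irv_{\{\tau_U < \tau_D\}}\, \mathbb{E}_{B_{\tau_U}}\left[\tau_D^p\right]\right] \leq \sup_{z \in \overline{D \setminus U}} \mathbb{E}_z\left[\tau_D^p\right].
\end{equation}
So the whole problem reduces to bounding $\mathbb{E}_z[\tau_D^p]$ for $z$ close to the boundary $\partial D$, in terms of the distance $d(z,\partial D)$. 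This is where the Lipschitz character of $D$ enters.

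The key ingredient is a boundary decay estimate for the exit-time distribution of a bounded Lipschitz domain: there exist constants $C = C_D > 0$ and $\beta = \beta_D \in (0,1]$, depending only on the Lipschitz character, such that
\begin{equation}\label{pf-step2}
\mathbb{P}_z(\tau_D > t) \leq C\, \frac{d(z,\partial D)^\beta}{t^{\beta/2}} \wedge 1, \qquad t > 0,\ z \in D.
\end{equation}
This is classical and follows, e.g., from the fact that a bounded Lipschitz domain satisfies a uniform exterior cone condition, so near any boundary point Brownian motion exits with probability bounded below in each "time-doubling" scale; iterating this cone argument gives the polynomial decay in $t$ with the power $\beta$ determined by the cone aperture, and the factor $d(z,\partial D)^\beta$ comes from the first scale. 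One can cite Bass--Burdzy or Bañuelos--Davis type estimates for this. Given \eqref{pf-step2}, write $\mathbb{E}_z[\tau_D^p] = p\int_0^\infty t^{p-1}\mathbb{P}_z(\tau_D>t)\,dt$ and split the integral at $t_0 = d(z,\partial D)^2$. On $(0,t_0)$ use $\mathbb{P}_z(\tau_D>t)\le 1$ to get a contribution $\lesssim d(z,\partial D)^{2p}$; on $(t_0,\infty)$, if $\beta/2 > p$ the integral $\int_{t_0}^\infty t^{p-1-\beta/2}\,dt$ converges and yields $\lesssim d(z,\partial D)^{2p - \beta} \cdot d(z,\partial D)^\beta = d(z,\partial D)^{2p}$ — but this is the wrong exponent when we want linear-order decay. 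To fix the exponent we should instead interpolate: since $D$ is bounded, $\mathbb{E}_z[\tau_D^p] \le C$ uniformly, so for the final bound we use $\mathbb{E}_z[\tau_D^p] \le (\mathbb{E}_z[\tau_D^p])^{1-s}(\mathbb{E}_z[\tau_D^p])^{s}$ and bound only one factor using the decay estimate. Cleanest is: for $p \ge 1$, by \eqref{pf-step2} with $t\to\infty$ behavior and boundedness, $\mathbb{E}_z[\tau_D^{p}] \le \mathbb{E}_z[\tau_D^{p-1+\delta}]\cdot(\sup\text{-bound})$ — I will choose parameters so that exactly one power of $d(z,\partial D)^\beta$ survives, giving \eqref{lip1}. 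For $0 < p < 1$, apply Jensen: $\mathbb{E}_z[\tau_D^p] \le (\mathbb{E}_z[\tau_D])^p$, then bound $\mathbb{E}_z[\tau_D] \le C\, d(z,\partial D)^\beta$ by the $p=1$ case of the integral split above, yielding the exponent $\beta p$ in \eqref{lip2}.

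Putting it together: combine \eqref{pf-step1} with the boundary estimate for $\mathbb{E}_z[\tau_D^p]$ applied at $z \in \overline{D\setminus U}$, noting $d(z,\partial D) \le \sup_{x \in D\setminus U} d(x,\partial D)$, to obtain \eqref{lip1} and \eqref{lip2}. The main obstacle I anticipate is getting the \emph{linear} power $\beta$ (not $\beta p$) in case (i) while only having the crude tail bound \eqref{pf-step2} — this requires carefully trading boundedness of $D$ against one factor of the boundary decay, rather than naively integrating, and making sure the constant $C_{p,D}$ depends only on $p$, the diameter of $D$, and the Lipschitz character. A secondary point to handle with care is that $B_{\tau_U}$ lies in $\overline{D\setminus U}$ but one should argue the relevant supremum of $\mathbb{E}_z[\tau_D^p]$ over this closed set is attained or at least finite and governed by $d(\cdot,\partial D)$, which is immediate once the boundary estimate is in hand.
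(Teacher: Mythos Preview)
Your reduction via the strong Markov property to $\sup_{z\in\partial U\cap D}\mathbb{E}_z[\tau_D^p]$ (hence to a supremum over $D\setminus U$) and your treatment of the case $0<p<1$ by Jensen's inequality are exactly what the paper does. The genuine gap is in case (i): the polynomial tail bound $\mathbb{P}_z(\tau_D>t)\le C\,d(z,\partial D)^\beta t^{-\beta/2}$ is by itself \emph{not} integrable against $t^{p-1}$ for large $t$ once $p\ge \beta/2$ (and $\beta\le 1$ for Lipschitz domains), so the direct integration you attempt cannot close; the subsequent ``interpolation'' you sketch (``$\mathbb{E}_z[\tau_D^{p}] \le \mathbb{E}_z[\tau_D^{p-1+\delta}]\cdot(\sup\text{-bound})$'') is not a valid inequality and you never specify how to extract exactly one factor of $d(z,\partial D)^\beta$.

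The paper's route for $p\ge 1$ is different and worth knowing. Since a bounded Lipschitz domain is intrinsically ultracontractive, one has $K_D(x,y,t)\asymp e^{-\lambda_1 t}\varphi_1(x)\varphi_1(y)$ for $t\ge t_0(D)$. Splitting $\mathbb{E}_z[\tau_D^p]=p\int_0^{t_0}+p\int_{t_0}^\infty$, the short-time piece is bounded by $p\,t_0^{p-1}\mathbb{E}_z[\tau_D]$, while the IU upper bound makes the tail piece $\le C_2\varphi_1(z)$; the IU \emph{lower} bound gives $\mathbb{E}_z[\tau_D]\ge c_D\varphi_1(z)$, so altogether $\mathbb{E}_z[\tau_D^p]\le C_{p,D}\,\mathbb{E}_z[\tau_D]$. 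The known Lipschitz estimate $\mathbb{E}_z[\tau_D]\le C_D\,d(z,\partial D)^\beta$ then finishes. This is precisely the ``trade boundedness against one factor of boundary decay'' you were groping for, packaged through IU and the ground-state eigenfunction.

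Your line of attack can be salvaged without IU, but it needs a second ingredient you never invoke: the spectral gap. For $t>s$ the Markov property at a fixed time $s$ gives $\mathbb{P}_z(\tau_D>t)\le \mathbb{P}_z(\tau_D>s)\cdot\sup_y\mathbb{P}_y(\tau_D>t-s)$; the first factor carries the boundary decay $d(z,\partial D)^\beta$ from your cone estimate at $t=s$, and the second decays like $e^{-\lambda_1(t-s)/2}$, making $t^{p-1}$ times the product integrable on $(s,\infty)$. On $(0,s)$ your polynomial tail yields $d(z,\partial D)^\beta\int_0^s t^{p-1-\beta/2}\,dt<\infty$. This recovers the conclusion with a more elementary argument than the paper's, at the cost of less precise constants.
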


\begin{proof}
Take $x\in U$. By the strong Markov property we have for any $p>0$,
\begin{align}
\mathbb{E}_{x}\left[\left(\tau_{D}-\tau_{U}\right)^{p}\right] & =\mathbb{E}_{x}\left[\mathbb{E}_{B_{\tau_{U}}}\left[\tau_{D}^{p}\right]\right]\nonumber \\
 & \leq\sup_{x\in\partial U}\mathbb{E}_{x}\left[\tau_{D}^{p}\right].\label{Eig-Haus-1}
\end{align}

Under the assumption  that $D$ is a bounded Lipschitz domain, it follows that $D$ is intrinsic ultracontractive (IU).  That is, for any  $\eta>0$, there is a $t_{0}=t_0(\eta, D)>0$
such that for all $t>t_0$ and all $x,y\in D$
\begin{equation}\label{IU}
\left(1-\eta\right)e^{-\lambda_{1}\left(D\right)t}\varphi_1(x)\varphi_1(y)\leq K_{D}(x,y,t)\leq\left(1+\eta\right)e^{-\lambda_{1}\left(D\right)t}\varphi_1(x)\varphi_1(y)
\end{equation}
 where $\varphi_1$ is the ground state  eigenfunction for $D$.    In fact, (IU) holds for a wider class of domains (beyond Lipschitz) and wider class of diffusion. It has been extensively studied in the literature with many different applications.   We refer the reader to \cite{Davies1989a} and \cite{Ban1991} for some of the first results on this topic that include the Lipschitz domains case.   Writing 
$$
H_D(x, y)=\int_0^{\infty}K_{D}(x,y,t) dt
$$  
for the Green's function for $D$, it follows trivially that for all IU domains $D$, $H_D(x, y)\geq C_D \varphi_1(x)\varphi_1(y)$, uniformly on $x, y\in D$.  Integrating over $D$ we see that 
\begin{equation}\label{eq-tau-varphi1}
\mathbb{E}_x\left[\tau_D\right]\geq C_D \varphi_1(x).
\end{equation} 
Take $\eta=1/2$. Let us first assume  $p>1$.
Applying \eqref{IU} we have  for all $x\in D$, 
\begin{align*}
\mathbb{E}_{x}\left[\tau_{D}^{p}\right] & =p\int_{0}^{\infty}t^{p-1}\mathbb{P}_{x}\left(\tau_{D}>t\right)dt\nonumber \\
 & =p\int_{0}^{t_{0}}t^{p-1}\mathbb{P}_{x}\left(\tau_{D}>t\right)dt+p\int_{t_{0}}^{\infty}t^{p-1}\int_{D}K_{D}(x,y,t/2)dydt\nonumber \\
 & \leq p\, t_{0}^{p-1} \mathbb{E}_{x}\left[\tau_{D}\right]+\frac{3}{2}\varphi_1(x)\,p\int_{t_{0}}^{\infty}t^{p-1}\int_{D}e^{-\lambda_{1}\left(D\right)t/2}\varphi_1(y)dydt\nonumber \\
 & \leq C_{1}\mathbb{E}_{x}\left[\tau_{D}\right]+C_{2}\varphi_1(x).
 \end{align*} 
 where $C_1, C_2$ are constants that depend on $p$ and $D$. Taking into account \eqref{eq-tau-varphi1} we then obtain that 
\[
\mathbb{E}_{x}\left[\tau_{D}^{p}\right]  \leq C_{p,D}\mathbb{E}_{x}\left[\tau_{D}\right]
\]
for some constant $C_{p,D}$ that only depend on $p$ and $D$.
Thus 
\begin{eqnarray}\label{IU-1}
\sup_{x\in U}\mathbb{E}_{x}\left[\left(\tau_{D}-\tau_{U}\right)^{p}\right] &\leq& C_{p,D}\sup_{x\in\partial U}\mathbb{E}_{x}\left[\tau_{D}\right]\nonumber\\
&\leq & C_{p,D}\sup_{x\in D\backslash{U}}\mathbb{E}_{x}\left[\tau_{D}\right]
\end{eqnarray}
On the other hand, for $x\in D\backslash{U}$, we have $\mathbb{P}_x(\tau_{U}>0)=0$, then 
\begin{align}\label{IU-2}
\sup_{x\in D\backslash{U}}\mathbb{E}_{x}\left[\left(\tau_{D}-\tau_{U}\right)^{p}\right] & =\sup_{x\in D\backslash{U}}\mathbb{E}_{x}\left[\tau_{D}^{p}\right]\nonumber\\
 & \leq C_{p,D}\sup_{x\in D\backslash{U}}\mathbb{E}_{x}\left[\tau_{D}\right].
\end{align}
Recall the fact that for a bounded Lipshitz domains, $\mathbb{E}_x\left[\tau_D\right]\leq C_D \left(d\left(x,\partial D\right)\right)^{\beta}$ where $\beta>0$ depends on the Lipszhitz character of the domain.   For the proof of the case $d=2$, which extends to any $d\geq 2$, see \cite[Proposition 2.3]{DeBlassie1987} or  the remark  in \cite[pg 199]{Banuelos-Davis1989}.  This proves the case $p\geq 1$ in (i). 

 If $0<p\leq 1$, then Jensen's inequality gives that $\mathbb{E}_{x}\left[\tau_{D}^{p}\right]\leq \left(\mathbb{E}_{x}\left[\tau_{D}\right]\right)^p$ and (ii) follows from \eqref{Eig-Haus-1}  and and \eqref{IU-2}. 
\end{proof}


\begin{prop}[Continuity of $\M_p$]
\label{prop:Existence2}For any $p>0$, the functional $\M_{p}\left(D\right)$
is continuous in the class $\mathcal{C}$ or $\mathcal{SC}$ with
respect to the Hausdorff metric. 

\end{prop}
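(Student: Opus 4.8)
The plan is to show that $D\mapsto\M_p(D)$ is continuous on $\mathcal{C}$ (and on $\SC$) with respect to $d_\mathcal{H}$ by combining two ingredients: the monotonicity/squeeze estimate of Lemma~\ref{Lem:Existence1a}, and the fact that in a fixed bounded convex (hence Lipschitz) domain, points that are close in the Hausdorff sense can be wedged between an inner and an outer approximating domain. Fix $D\in\mathcal{C}$ and let $D_n\to D$ in $d_\mathcal{H}$; all the $D_n$ may be assumed to lie in a fixed ball, and by convexity of the limit we may also assume (after a harmless translation, using that $0\in D$) that for every $\delta>0$ there is $n$ large with $(1-\delta)D\subset D_n\subset(1+\delta)D$, where $tD$ denotes the dilate of $D$ about a fixed interior point. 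This sandwiching is the standard mechanism by which Hausdorff convergence of convex bodies is upgraded to a two-sided inclusion by slightly dilated copies of the limit; it is where convexity is actually used.

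First I would prove continuity along such a dilation-sandwiched sequence. Write $U=(1-\delta)D$, so $U\subset D_n\subset D$ is false in general, but $U\subset D_n$ and $U\subset D$ both hold, and moreover $D_n\subset (1+\delta)D=:D^+$, so that $U\subset D_n\subset D^+$ and $U\subset D\subset D^+$. Apply Lemma~\ref{Lem:Existence1a} with the fixed bounded Lipschitz domain $D^+$ and the inner domain $U$: for $p\ge1$,
\[
\sup_{x\in D^+}\mathbb{E}_x\bigl[(\tau_{D^+}-\tau_U)^p\bigr]\le C_{p,D^+}\sup_{x\in D^+\setminus U}\bigl(d(x,\partial D^+)\bigr)^{\beta},
\]
and the right-hand side tends to $0$ as $\delta\to0$ because $D^+\setminus U$ shrinks to $\partial D$ and $d(\cdot,\partial D^+)$ is controlled by the ``thickness'' $\sim\delta$ of the shell; the case $0<p<1$ is identical using \eqref{lip2}. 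Since $\tau_U\le\tau_{D_n}\le\tau_{D^+}$ and $\tau_U\le\tau_D\le\tau_{D^+}$ pathwise, we get $0\le\tau_{D^+}-\tau_{D_n}\le\tau_{D^+}-\tau_U$ and likewise with $D$, so
\[
\bigl|\mathbb{E}_x[\tau_{D_n}^p]-\mathbb{E}_x[\tau_{D^+}^p]\bigr|\le \mathbb{E}_x\bigl[(\tau_{D^+}-\tau_U)^p\bigr]\cdot(\text{const}),
\]
uniformly in $x$ (using, for $p\ge1$, the elementary bound $b^p-a^p\le p\,b^{p-1}(b-a)$ together with the uniform bound on $\tau_{D^+}$, and for $p<1$ just $b^p-a^p\le(b-a)^p$). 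Taking suprema over $x$ and then $\delta\to0$ shows $\M_p(D_n)\to\M_p(D)$, since $\M_p(D^+)\to\M_p(D)$ by the same estimate applied to the pair $U\subset D\subset D^+$.

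Second I would reduce the general Hausdorff-convergent sequence to the sandwiched case. Given $D_n\to D$ in $d_\mathcal{H}$ with $D_n,D\in\mathcal{C}$, for each $\delta>0$ Hausdorff convergence of convex bodies gives $n_\delta$ so that $n\ge n_\delta$ implies $(1-\delta)D\subset D_n\subset(1+\delta)D$ (dilating about a fixed $x_0\in D$); standard, e.g.\ from the Hausdorff convergence together with $x_0$ being an interior point of $D$. The estimate above, applied with $U=(1-\delta)D$ and $D^+=(1+\delta)D$, bounds $|\M_p(D_n)-\M_p(D)|$ by a quantity depending only on $\delta$ and $D$ that vanishes as $\delta\to0$; letting $n\to\infty$ then $\delta\to0$ finishes it. For the class $\SC$ one notes additionally, via Lemma~\ref{Lem:Existence1}, that the limit stays in $\SC$, and that dilates of a doubly symmetric convex body about the center (which is the origin) are again doubly symmetric, so the entire sandwiching argument stays inside $\SC$; no change is needed. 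The main obstacle is the uniformity in $x$ of the bound $|\mathbb{E}_x[\tau_{D_n}^p]-\mathbb{E}_x[\tau_D^p]|$: this is exactly what Lemma~\ref{Lem:Existence1a} is designed to supply (its right-hand side is a supremum over the shell, independent of the base point $x$), so the proof is really just a careful bookkeeping of pathwise monotonicity of exit times under inclusion plus that lemma, together with the convex-geometry fact that Hausdorff convergence sandwiches a convex body between concentric dilates of the limit.
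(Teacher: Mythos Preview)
Your proof is correct and follows essentially the same strategy as the paper: both reduce continuity to Lemma~\ref{Lem:Existence1a} via the elementary inequalities $a^p-b^p\le(a-b)^p$ (for $p<1$) and $x^p-y^p\le p\,x^{p-1}(x-y)$ plus H\"older (for $p\ge1$), together with the convex-geometry fact that Hausdorff-close convex bodies can be sandwiched by dilates. The only packaging difference is that the paper uses a \emph{one-sided} inclusion $t_nD_n\subset D$ with $t_n\to1$, so that the outer domain in Lemma~\ref{Lem:Existence1a} is the fixed $D$ and the constant $C_{p,D}$ never varies, and then recovers $\M_p(D_n)$ from $\M_p(t_nD_n)$ via the scaling identity $\M_p(tD)=t^{2p}\M_p(D)$; in your two-sided version you should remark (trivially, by scaling) that $C_{p,(1+\delta)D}$ and the Lipschitz exponent $\beta$ remain bounded as $\delta\to0$.
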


\begin{proof}
Fix $p>0$. We first prove $\M_p$ is continuous in the class $\mathcal{SC}$. Showing $\M_p$ is continuous in the class $\mathcal{C}$ is done similarly.  Let $\left\{ D_{n}\right\}\in \mathcal{SC} $ such that $D_{n}\to D\in \mathcal{SC}$ as $n\to\infty$
with respect to the Hausdorff metric. We show $\M_{p}\left(D_{n}\right)\to \M_{p}\left(D\right)$
as $n\to\infty$. 

There exists a sequence $\left\{ t_{n}\right\} \subset\mathbb{R}_{+}$
such that $t_{n}\to1$ and $t_{n}D_{n}\subset D$ for every $n$. 
By monotonicity of exit times we have for all $x\in D$ almost surely that
\begin{equation}
\tau_{t_{n}D_{n}}\leq \tau_{D}.\label{Existence_0}
\end{equation}

If $0<p<1$, using the elementary inequality $a^{p}-b^{p}\leq\left(a-b\right)^{p}$
whenever $0<b\leq a$, we have that $\mathbb{E}_{x}\left[\tau_{D}^{p}\right]\leq\mathbb{E}_{x}\left[\tau_{t_{n}D_{n}}^{p}\right]+\mathbb{E}_{x}\left[\left(\tau_{D}-\tau_{t_{n}D_{n}}\right)^{p}\right]$ for all $x\in D$. By Lemma \ref{Lem:Existence1a} (ii) and \eqref{eq-Haus} we have that
%
\begin{align}\label{Extremal_Exists_1}
\M_{p}\left(D\right)&\leq \M_{p}\left(t_{n}D_{n}\right)+\mathbb{E}_{x}\left[\left(\tau_{D}-\tau_{t_{n}D_{n}}\right)^{p}\right]\,\notag\\ 
 & \leq\mathcal{M}_{p}\left(t_{n}D\right)+C_{\beta,D}\sup_{x\in D\backslash t_{n}D}\left(d\left(x,\partial D\right)\right)^{\beta p}\notag\\
 & \leq\mathcal{M}_{p}\left(t_{n}D\right)+C_{\beta,D}\left(d_{\mathcal{H}}\left(D,t_{n}D\right)\right)^{\beta p}
\end{align}
where the constant $C_{\beta,D}$ depends only on $D$. 

For $p\geq1$, using the elementary inequality $x^{p}-y^{p}\leq px^{p-1}\left(x-y\right)$
whenever $0<y\leq x$,  we have that 
\begin{align}
\mathbb{E}_{x}\left[\tau_{D}^{p}-\tau_{t_{n}D_{n}}^{p}\right] & \leq p\mathbb{E}_{x}\left[\tau_{D}^{p-1}\left(\tau_{D}-\tau_{t_{n}D_{n}}\right)\right]\nonumber \\
 & \leq p\left(\mathbb{E}_{x}\left[\tau_{D}^{p}\right]\right)^{(p-1)/p}\left(\mathbb{E}_{x}\left[\left(\tau_{D}-\tau_{t_{n}D_{n}}\right)^{p}\right]\right)^{1/p}
.\label{Extremal_Exists_2}
\end{align}
Again by Lemma \ref{Lem:Existence1a} (i), we have, 

\begin{align*}
\sup_{x\in D}\mathbb{E}_{x}\left[\left(\tau_{D}-\tau_{t_{n}D_{n}}\right)^{p}\right] & \leq C_{p,D}\sup_{x\in D\backslash t_{n}D_{n}}d\left(x,\partial D\right)^{\beta}\\
 & \leq C_{p,D}\, d_{\mathcal{H}}\left(D,t_{n}D_{n}\right)^{\beta}
\end{align*}
so that 
\begin{equation}
\sup_{x\in D}\mathbb{E}_{x}\left[\tau_{D}^{p}-\tau_{t_{n}D_{n}}^{p}\right]\leq C_{p,D}^{1/p}\,p\,\M_{p}\left(D\right)^{(p-1)/p}d_{\mathcal{H}}\left(D,t_{n}D_{n}\right)^{\beta/p}.\label{Extremal_Exists_2a}
\end{equation}
Thus using \eqref{Extremal_Exists_2a} we have 
\begin{equation}\label{Extremal_Exists_2b}
\M_{p}\left(D\right)\leq \M_{p}\left(t_{n}D_{n}\right)+C_{p,D}^{1/p}\,p\,\M_{p}\left(D\right)^{(p-1)/p}d_{\mathcal{H}}\left(D,t_{n}D_{n}\right)^{\beta/p}.
\end{equation}

Together with \eqref{Extremal_Exists_1} 
we then conclude that there exist  constants $C_{p,D},C_{\beta,D}>0$ such that 
\begin{align}\label{Extremal_Exists_3}
\M_{p}\left(D\right) & \leq \M_{p}\left(t_{n}D_{n}\right)+C_{\beta,D}\left(d_{\mathcal{H}}\left(D,t_{n}D_{n}\right)\right)^{p}1_{(p<1)}\\ \nonumber
&\quad\quad +C_{p,D}^{1/p}\, p\, \M_{p}\left(D\right)^{(p-1)/p}d_{\mathcal{H}}\left(D,t_{n}D_{n}\right)^{\beta/p}1_{(p\geq1)}
\end{align}
Combining \eqref{Existence_0}, \eqref{Extremal_Exists_3} and
the fact that $\M_{p}\left(t_{n}D_{n}\right)=t_{n}^{2p}\M_{p}\left(D_{n}\right)$
gives the desired result. 
\end{proof}

\subsection{Proof of Theorem \ref{Thm:Existence} and a conjecture on the extremal}\label{Sec:5.4}
We may finally  prove our main result of this section.

\begin{proof}[Proof of Theorem \ref{Thm:Existence}]
Fix $p>0$. We consider the class of symmetric bounded convex domains $\mathcal{SC}$. The proof
is the same for $\mathcal{C}$. Let $M_{p,d}\left(\mathcal{SC}\right)=\sup_{D\in\mathcal{SC}}G_{p,d}\left(D\right)$
and pick $\{D_{n}\}\subset\mathcal{SC}$ such that 
\[
\lim_{n\to\infty}G_{p,d}\left(D_{n}\right)=M_{p,d}\left(\mathcal{SC}\right).
\]
By scaling we may assume the domains $D_{n}$ are all contained in a
fixed compact set $K$. By the Blaschke selection Theorem, there is
a subsequence $\left\{ D_{n_{k}}\right\} \subset\mathcal{SC}$ such
that $D_{n_{k}}\to D\in\mathcal{SC}$ with respect to $d_{\mathcal{H}}$.
By Lemma \ref{Lem:Existence1}, we know that $D\in SC$. We can rename
this subsequence $D_{n}$. By Equations $(3.2)$ and $(3.3)$ of \cite[page 12]{Henrot-Lucardesi-Philippin-2018}  we know that $D$ has a non-empty interior. By Proposition
\ref{prop:Existence2}, $\M_{p}$ is continuous with respect to the Hausdorff
metric in the class $\mathcal{SC}$ and $\lambda_{1}\left(D\right)$
is also well known to be continuous with respect to $d_{\mathcal{H}}$
(see \cite{Henrot-Pierre-2018}). Thus 
\[
G_{p, d}\left(D\right)=\lim_{n\to\infty}G_{p, d}\left(D_{n}\right)=M_{p,d}\left(\mathcal{SC}\right),
\]
as needed. 
\end{proof}

With the existence of extremals guaranteed for all  dimension and all  $0<p<\infty$, we have the following.

\begin{conjecture}[Conjecture for $M_{p, d}\left(\mathcal{SC}\right)$]\label{Conjecture-Double-Symmetric} With the supremum taken over all domains in $\mathcal{SC}$, we have 
\[
M_{p, d}\left(\mathcal{SC}\right)= \lambda_1^p(Q_d)\E_{0}[\tau_{Q_d}^p],
\]
where 
$$
Q_{d}=\{(x_1, x_2, \dots, x_d)\in \R^d: |x_{i}|<1\},
$$ 
denotes the unit cube in $\R^d$.   
\end{conjecture}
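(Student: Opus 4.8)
The plan is to identify the extremal domain for the class $\SC$ whose existence is guaranteed by Theorem~\ref{Thm:Existence} with the cube $Q_d$. Since $G_{p,d}$ is invariant under dilations, and since for a convex $D$ the function $x\mapsto\E_x[\tau_D^p]$ attains its maximum at a single interior point which, by the symmetries of $D\in\SC$, must be the centre $0$, we may write $G_{p,d}(D)=\lambda_1^p(D)\,\E_0[\tau_D^p]$ throughout $\SC$. I would attack the problem in two tiers: (i) show that among rectangular boxes $R_{\mathbf a}=\prod_{i=1}^{d}(-a_i,a_i)$ the cube is the unique maximizer of $G_{p,d}$; and (ii) show that an arbitrary $D\in\SC$ can be replaced by a box without decreasing $G_{p,d}$, so that the supremum over $\SC$ is attained on boxes and step (i) concludes.

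For step (i), on a box the coordinates of Brownian motion run independently, so $\P_0(\tau_{R_{\mathbf a}}>t)=\prod_i\P_0(\tau_{(-a_i,a_i)}>t)$; using $\tau_{(-a_i,a_i)}\stackrel{d}{=}a_i^2\,\tau_{(-1,1)}$ together with the eigenfunction expansion of the heat kernel on an interval (as in \eqref{Exit-Interval}), both $\lambda_1(R_{\mathbf a})=\frac{\pi^2}{4}\sum_i a_i^{-2}$ and $\E_0[\tau_{R_{\mathbf a}}^p]$ become explicit functions of $\mathbf a$, and after the normalization $\sum_i a_i^{-2}=d\pi^2/4$ one is left with a finite-dimensional extremal problem. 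I would try to prove the symmetric point $a_1=\dots=a_d=1$ is the global maximum by a Schur-convexity/rearrangement argument in the variables $b_i=a_i^{-2}$ together with a second-order check; for $p=1$ the series in \eqref{Exit-Interval} makes everything fully explicit. As a sanity check, the two degenerate boxes---the cube $Q_d$ and the slab $S_d$ ($a_1,\dots,a_{d-1}\to\infty$)---satisfy $G_{1,d}(Q_d)>\pi^2/4=m_{1,d}(\cC)$ (see \eqref{PayneConvex}), consistent with the cube being a maximizer rather than a minimizer.

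For step (ii), which I expect to be the genuine obstacle, the natural tool is shape calculus at the extremal $D^{*}$. One computes the first variation of $D\mapsto\lambda_1^p(D)\,\E_0[\tau_D^p]$ under a compactly supported normal perturbation $V$ of $\partial D^{*}$ preserving convexity and the coordinate symmetries (first checking that the maximizing point of $\E_x[\tau_D^p]$ stays at the fixed centre $0$ under such perturbations, which follows from symmetry). Hadamard's formula gives $\delta\lambda_1=-\int_{\partial D^{*}}|\partial_n\varphi_1|^2\,V\,d\sigma$ and an analogous boundary integral for $\delta\E_0[\tau_D^p]$ involving normal derivatives of the torsion moment functions $u_k$ from \eqref{p-Torsion-PDE}; the optimality inequality $\delta G_{p,d}\le 0$ for every admissible $V$ should force the resulting boundary integrand to be constant on each strictly convex portion of $\partial D^{*}$, and Serrin-type rigidity for the resulting overdetermined problem would then exclude curved boundary pieces, leaving $\partial D^{*}$ a union of flat faces orthogonal to the axes, i.e.\ $D^{*}$ a box. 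The hard part is making this rigorous: the conjectured extremal $Q_d$ has corners where the shape derivative and the regularity of $\varphi_1$ and of the $u_k$ degenerate, so the variational argument must be run away from corners and supplemented---perhaps by a penalization/limiting scheme, or by a direct comparison via Steiner symmetrization in each axis direction, carefully tracking the \emph{opposing} effects on the factor $\lambda_1^p$ (which decreases) and the factor $\E_0[\tau_D^p]$ (which increases). It is exactly this reconciliation of the mixed monotonicity of the two factors that keeps the full conjecture open; in the special case $d=2$, $p=1$ one might instead pursue the conformal-mapping and Hardy-space estimates of \cite{Banuelos-Carrol1994} restricted to doubly symmetric domains.
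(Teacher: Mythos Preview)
The statement you are attempting to prove is labelled a \emph{conjecture} in the paper, and the paper provides no proof of it whatsoever. There is therefore nothing to compare your attempt against: the authors state the conjecture, record some supporting numerical evidence (the inequalities \eqref{Ball-vs-Square} and \eqref{Ball-Square-Triangle}), and then in Remarks~\ref{RemRec}--\ref{RemElli} discuss special cases, explicitly noting that even your step~(i)---the inequality \eqref{eq:Rectangle-vs-Square} for rectangular boxes---remains open: ``Unfortunately, despite its simplicity and all its possible formulations, we have not been able to fully verify \eqref{eq:Rectangle-vs-Square} for all rectangles even in the case $d=2$ and $p=1$.'' So your proposed Schur-convexity argument for step~(i), which you describe only as something you ``would try,'' would already be a new result if it worked; the paper does not claim it and offers no hint that such an argument goes through.

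Your proposal is, as you yourself indicate, a strategy rather than a proof. Both tiers contain genuine, acknowledged gaps: in step~(i) you do not actually carry out the Schur-convexity/second-order analysis, and in step~(ii) you correctly identify the core obstruction (the opposing monotonicities of $\lambda_1^p$ and $\E_0[\tau_D^p]$ under symmetrization, and the corner singularities of the putative extremal $Q_d$ which obstruct a clean shape-derivative argument) without overcoming it. A smaller technical point: your assertion that $x\mapsto\E_x[\tau_D^p]$ has a \emph{unique} interior maximizer in a convex $D$ is not immediate for general $p>0$; log-concavity of the survival probability gives a convex set of maximizers containing $0$ by symmetry, which is enough for your purposes, but uniqueness would need a separate argument. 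In short, what you have written is a reasonable research outline for attacking an open problem, not a proof, and the paper's ``proof'' is nonexistent because the statement is conjectural.
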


\subsection{Remarks on conjectures; rectangles, triangles, and ellipses}

\begin{remark}[Rectangles]\label{RemRec}
Conjecture \ref{Conjecture-Double-Symmetric} in general seems to be 
nontrivial. 
In fact, even the simplest case of rectangles does not seem obvious.  More precisely, let ${\bf{a}}=(a_1, a_2, \dots, a_d)$, where $a_k>0$ for all $k$.  Set  $R_{\bf{a}}=\{x=(x_1, x_2, \dots, x_d): |x_k|<a_k, k=1, \dots, d\}$. (We call $R_{\bf{a}}$ a rectangle.) Denote  the origin in $\R^d$ by ${\bf{0}}$.   In this case we would want to show that 
for all  ${\bf{a}}\in\mathbb{R}^d$,
\begin{equation}\label{eq:Rectangle-vs-Square}
\lambda_{1}^p(R_{\bf{a}})\E_{\bf{0}}\left[\tau_{R_{\bf{a}}}^p\right]\leq \lambda_{1}^p(Q_d)\E_{\bf{0}}\left[\tau_{Q_d}^p\right]
\end{equation}
with equality only when $R_{\bf{a}}=Q_d$. 
Since the eigenvalues of both $R_{\bf{a}}$ and $Q_d$ are explicit  and the components of the Brownian motion are independent,  the inequality \eqref{eq:Rectangle-vs-Square} can be stated in several different forms. Here is one.  Let $I_{a_k}=(-a_k, a_k)$ and recall that  $I=(-1, 1)$.  Then  \eqref{eq:Rectangle-vs-Square} is equivalent to 
\begin{equation}\label{eq:Rectangle-vs-Square-viaheat}
\left(\sum_{k=1}^d\frac{1}{a_k^2}\right)^p\int_{{0}}^{\infty} p\, t^{p-1}\prod_{k=1}^d\P_{{0}}(\tau_{I_{a_k}}>t)dt
\leq d^p \int_0^{\infty}p\, t^{p-1} \left(\P_0(\tau_{I}>t)\right)^d dt.
\end{equation} 
Using the fact that $\P_{{0}}\left(\tau_{I_{a_k}}>t\right)=\P_{0}\left(\tau_{I}>\frac{t}{a_k^2}\right)$ we may even  assume that $$a_1=1<a_2<\dots<a_d.$$  

Using the fact that we know the heat kernel for an interval in terms of the eigenfunctions expansion (all which are explicitly given), the inequality has a rather appealing form.  Let us look at the case $d=2$ and $p=1$.   Then  \eqref{eq:Rectangle-vs-Square-viaheat} is equivalent to   
\begin{eqnarray}
&&\left(1+a^{2}\right)\left[1-\frac{32}{\pi^{3}}\sum_{n=0}^{\infty}\frac{\left(-1\right)^{n}}{\left(2n+1\right)^{3}}\text{sech}\left[\left(n+\frac{1}{2}\right)\frac{\pi}{a}\right]\right],\nonumber\\
&&\leq 2\left[1-\frac{32}{\pi^{3}}\sum_{n=0}^{\infty}\frac{\left(-1\right)^{n}}{\left(2n+1\right)^{3}}\text{sech}\left[\left(n+\frac{1}{2}\right)\pi\right]\right], 
\end{eqnarray} 
for all $a>1$. 

Unfortunately, despite its simplicity and all its possible formulations,  we have not been able to fully verify \eqref{eq:Rectangle-vs-Square} for all rectangles even in the case $d=2$ and $p=1$. 
\end{remark}

\begin{remark}[Triangles]\label{RemTri} 
It may be of interest to mention as well that, to the best of our knowledge, the special case of Conjecture \eqref{ConvexConj} for triangles does not seem to have been proven: 
\begin{equation}\label{trian}
\lambda_1(T)\sup_{x\in T} \E_x[\tau_T ]
 \leq \lambda_1\left(\T\right)\sup_{x\in \T}\E_{x}\left[\tau_{\T}\right],  
\end{equation}
 for all triangles $T$, where $\T$ is the equilateral triangle. Furthermore, equality holds only when $T=\T$. As pointed out in \cite[ Corollary 3.7]{Henrot-Lucardesi-Philippin-2018}, with  explicit expressions for $\E_{x}\left[\tau_{\T}\right]$ and $\lambda_1(\T)$, we have 
 $$
 \lambda_1\left(\T\right)\sup_{x\in \T}\E_{x}\left[\tau_{\T}\right]=\frac{8\pi^2}{27}\approx 2.9243. 
 $$
Combining  this with \eqref{BallCom} and \eqref{SquareCom},  we see that 
\begin{equation}\label{Ball-Square-Triangle}
\lambda_{1}\left(B\right)\sup_{x\in B}\mathbb{E}_{x}\left[\tau_{B}\right]<\lambda_{1}\left(Q_{2}\right)\sup_{x\in Q_{2}}\mathbb{E}_{x}\left[\tau_{Q_{2}}\right]<\lambda_{1}\left(\mathbb{T}\right)\sup_{x\in\mathbb{T}}\mathbb{E}_{x}\left[\tau_{\mathbb{T}}\right]. 
\end{equation}

\end{remark} 
For any convex domain $D\subset \R^2$ with finite inradius $R_D$ (supremum of radii of all disc contained in $D$), it holds that 
 \begin{equation}\label{LifeInradius} 
 \frac{1}{2}R_D^2\leq \sup_{x\in D} \E_x[\tau_D ]
 \leq \sup_{x\in S} \E_x[\tau_S ]=R_D^2, 
 \end{equation} 
 where $S\subset \R^d$ is the infinite strip of inradius $R_D$. The left hand side inequality is trivial by domain monotonicity of the exit time. For the second inequality, we refer the reader to  \cite{Sperb}. For a different proof, which extends to all moments, see \cite{Banuelos-Latala-Mendez-Hernandez-2001}. In \cite{Bartek1}, it is proved that 
 \begin{equation}\label{EigenInradius} 
 \lambda_{1}(T) R_T^2\leq \lambda_{1}(\T)R_{\T}^2=\frac{4\pi^2}{9}, 
 \end{equation}
 with equality only when $T$ is the equilateral  triangle  $\T$.  For a different proof of \eqref{EigenInradius} which uses dissymmetrization techniques, see \cite{Solynin} 
 
Although the inequalities \eqref{trian}  and  \eqref{EigenInradius} are in fact quite different and one does not imply the other, the validity of one lends credibility to the validity of the other. One can also see, for example, that with \eqref{LifeInradius} inequality  \eqref{EigenInradius} gives \eqref{trian} with a factor of 2 on the right hand side.

\begin{remark}[Ellipses]\label{RemElli} As a final remark 
we point  out that for $p=1$, both conjectures \eqref{ConvexConj} and  \ref{Conjecture-Double-Symmetric}  hold for ellipses.  In fact, the following stronger statement holds.  Let 
\[
E_{a,b}:=\left\{ \left(x,y\right)\in\mathbb{R}^{2}:\frac{x^{2}}{a^{2}}+\frac{y^{2}}{b^{2}}<1\right\} .
\]
Then, with $B$ the unit disc in $\R^2$,
\begin{equation}\label{ellip-1}
\frac{\pi^{2}}{4}\leq \lambda_1(E_{a, b})\E_{(0,0)}[\tau_{E_{(a, b)}}]\leq \lambda_{1}(B){\E_{(0,0)}}\left[\tau_{B}\right]=\frac{j_{0}^{2}}{2},
\end{equation}
To prove this inequality, it suffices to show that 
\begin{equation}
\frac{\pi^{2}}{4}\left(\frac{a^{2}+b^{2}}{a^{2}b^{2}}\right)\leq\lambda_{1}\left(E_{a,b}\right)\leq\frac{j_{0}^{2}}{2}\left(\frac{a^{2}+b^{2}}{a^{2}b^{2}}\right).\label{Eliip-Eigenvalue1}
\end{equation}
Assuming for the moment the validity of \eqref{Eliip-Eigenvalue1},  observe that since it is easy to check that 
\[
\mathbb{E}_{(x,y)}\left[\tau_{E_{a,b}}\right]=\frac{a^{2}b^{2}-b^{2}x^{2}-a^{2}y^{2}}{(a^{2}+b^{2})},
\]
by showing that the right hand side satisfies $\frac{1}{2}\Delta u=-1$ with zero boundary conditions, we have 
\begin{equation*}
\mathbb{E}_{(0,0)}\left[\tau_{E_{a,b}}\right]=\frac{a^{2}b^{2}}{a^{2}+b^{2}}
\end{equation*}
Thus the right hand side of \eqref{Eliip-Eigenvalue1} implies the right hand side of \eqref{ellip-1}.

The left hand side of \eqref{Eliip-Eigenvalue1} is trivial by domain monotonicity. Since $E_{a,b}\subset(-a,a)\times(-b,b)$, it follows immediately that 
\begin{align*}
\lambda_{1}\left(E_{a,b}\right) & \geq\lambda_{1}\left((-a,a)\times(-b,b)\right)
  =\frac{\pi^{2}}{4}\left(\frac{a^{2}+b^{2}}{a^{2}b^{2}}\right). 
\end{align*}



The right hand side inequality in \eqref{Eliip-Eigenvalue1} is due to Poly\'a and Szeg\"o and can be found in \cite[pg. 98]{Polya-Szego-1951}. Their proof 
 is based on the technique known as conformal transplantation. To do so, one can use a  
test function $\varphi(x,y)$ with $\varphi\mid_{\partial E_{a,b}}=0$ which is an obvious modification of the eigenfunction for the disc 
and plug it into the Rayleigh quotient. Such function is given by 
$$\varphi(x,y)=J_{0}\left(j_{0}\sqrt{\frac{x^{2}}{a^{2}}+\frac{y^{2}}{b^{2}}}\right),$$
where $J_0$ is  the first Bessel function and $j_0$ is its first positive root. See \cite{Polya-Szego-1951} for details. 

\end{remark}

\begin{acknowledgement}
We would like to thank Hugo Panzo for useful discussions on the topic of this paper. We would also like to thank an anonymous referee for helpful comments that helped improved the exposition of this paper. 
\end{acknowledgement}

\bibliographystyle{plain}	

\end{document}